\theoremstyle{definition}
\newtheorem{theorem}{Theorem}[section]
\newtheorem{prop}[theorem]{Proposition}
\newtheorem{defn}[theorem]{Definition}
\newtheorem{rem}[theorem]{Remark}
\newtheorem{ex}[theorem]{Example}
\newcommand{\BMD}{M}
\newcommand{\BMDmod}{\widetilde{M}}
\title{Variations on Baur--Marsh's determinant}
\author{Philipp Lampe}
\begin{document}

\maketitle

\abstract{Baur and Marsh computed the determinant of a matrix assembled from the cluster variables in a cluster algebra of type A. In this article we wish to describe two variations. On the one hand, we compute determinants of matrices assembled from the squares of the cluster variables in Baur--Marsh's matrix. One such determinant admits an interpretation as a Cayley--Menger determinant. On the other hand, we wish to present a formula for the determinant of a matrix of cluster variables in a cluster algebra of type D. This cluster algebra is associated with a marked oriented surface. As in Baur--Marsh's setup the matrix is indexed by the marked points of the surface and an entry is given by the cluster variable corresponding to an arc between two marked points. Our formula asserts that the determinant may again be written as a product of cluster variables.

\section{Introduction}

Cluster theory is a newish and active mathematical domain with connections to various branches of mathematics such as root systems, geometry, and algebraic combinatorics. It was initiated by Sergey Fomin and Andrei Zelevinsky in a seminal treatise \cite{FZ}, and we will give a short introduction to cluster theory in Section \ref{Sec:CA}.\let\thefootnote\relax\footnote{This work was supported by EPSRC grant EP/N005457/1.}

An illustrative example is the cluster algebra $\mathcal{A}(A_n)$ (for some positive integer n) which admits several equivalent descriptions. First, it may be seen as a finite dynamical system of tuples of Laurent polynomials attached to a finite root system of type $A_n$. Second, Fomin--Shapiro--Thurston \cite{FST} give a geometric description of the cluster algebra $\mathcal{A}(A_n)$ in terms of an $(n+3)$-gon with vertices $0,1,\ldots,n+2$. Here the cluster variables $x_{ij}=x_{ji}\in\mathcal{A}(A_n)$ are parametrized by two different indices $i,j\in\{0,1,\ldots,n+2\}$ and the relations among the cluster variables are analogues of Ptolemy's relation for quadrilaterals. The relations state that $x_{ik}\cdot x_{jl}=x_{ij}\cdot x_{kl}+x_{jk}\cdot x_{il}$ whenever $i<j<k<l$. Third, from the perspective of algebraic combinatorics we consider a $2\times (n+3)$ matrix $X$ assembled from $2(n+3)$ formal variables. Then the cluster variables $x_{ij}$ with $i<j$ can be realized as the $2\times 2$ minors of $X$ supported on columns $i$ and $j$. In this context the above relations among the cluster variables are known as Pl\"ucker relations.

Karin Baur and Robert Marsh \cite{BM} proved the following theorem which shall be the starting point of this paper. It is presented in more detail as Theorem \ref{thm:BaurMarsh} in Section \ref{Sec:Det}.

\begin{theorem}[Baur--Marsh]
Let $BM$ be the symmetric $(n+3)\times(n+3)$ matrix with entries $BM_{ij}=x_{ij}\in\mathcal{A}(A_n)$ (where we interpret the diagonal entries $x_{ii}$ as zero). The determinant satisfies the relation:
\begin{align*}
\operatorname{det}(BM)=-(-2)^{n+1}\cdot x_{0,1}\cdot x_{1,2}\cdot x_{2,3}\cdot\ldots\cdot x_{n+1,n+2}\cdot x_{n+2,0}.
\end{align*}
\end{theorem}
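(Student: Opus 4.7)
The plan is to lift the identity into the polynomial ring in the $(n+3)$ column vectors of $X$ using the Pl\"ucker realization recalled in the introduction. Write $v_i = (a_i, b_i)^\top$ and $p_{ij} = a_i b_j - a_j b_i$; the assignment $x_{ij} \mapsto p_{\min(i,j),\max(i,j)}$ extends to a well-defined ring homomorphism $\mathcal{A}(A_n) \to R := \mathbb{Z}[a_0, b_0, \ldots, a_{n+2}, b_{n+2}]$, since the Pl\"ucker relations coincide with the Ptolemy relations. Under this map $BM$ becomes the symmetric matrix $(p_{\min(i,j),\max(i,j)})_{i,j}$ (with zeroes on the diagonal), and both $\det(BM)$ and $\prod_{i=0}^{n+2} x_{i,i+1 \bmod (n+3)}$ become homogeneous polynomials of bidegree $(n+3,n+3)$ in the $a$'s and $b$'s; it therefore suffices to verify the identity in $R$.

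The crux is to show that each of the $(n+3)$ edge polynomials $p_{i,i+1 \bmod (n+3)}$ divides $\det(BM)$. Restricting to the hyperplane $\{p_{i,i+1} = 0\}$ amounts to writing $v_{i+1} = \lambda v_i$, which yields $p_{k,i+1} = \lambda p_{k,i}$ for every $k$. From this I would read off that row $i+1$ of $BM$ equals $\lambda$ times row $i$: the entries in columns $i$ and $i+1$ all vanish on both rows, and for any remaining column $k$ the proportionality follows directly from the previous identity. The key point is that since $i$ and $i+1$ are \emph{consecutive}, no column index lies strictly between them in the linear order, so no sign flip is introduced when the Pl\"ucker indices are reordered. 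For the cyclic edge $\{n+2, 0\}$ the same analysis puts rows $0$ and $n+2$ in uniform proportion with constant $-\lambda$, the sign change being uniform across the intermediate columns $1, \ldots, n+1$; the determinant vanishes either way. Since the edge polynomials are pairwise coprime irreducibles in the UFD $R$, their product divides $\det(BM)$. (For a non-edge $\{i, j\}$ with $j > i + 1$ the analogous strategy breaks down, because intermediate columns $i < k < j$ pick up a sign opposite to that of the columns outside $[i,j]$; this is precisely why only the $(n+3)$ edges appear in the formula.)

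A comparison of bidegrees then yields $\det(BM) = C \cdot \prod_{i=0}^{n+2} x_{i, i+1 \bmod (n+3)}$ for some constant $C \in \mathbb{Z}$, which I would pin down by a single specialization. Setting $a_i = 1$ and $b_i = i$ gives $p_{ij} = j - i$ for $i<j$ and turns $BM$ into the Toeplitz matrix with entries $|i-j|$. Two successive rounds of the row operation $R_i \leftarrow R_i - R_{i-1}$ (first for $i = n+2, n+1, \ldots, 1$, then for $i = n+2, n+1, \ldots, 2$) transform rows $2, \ldots, n+2$ into the sparse vectors $2 e_1, 2 e_2, \ldots, 2 e_{n+1}$, leaving only the top two rows $(0,1,\ldots,n+2)$ and $(1,-1,\ldots,-1)$ nontrivial. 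Expanding along column $0$, where only row $1$ contributes, reduces matters to the determinant of an almost-diagonal matrix with value $(n+2)(-1)^{n+1}2^{n+1}$, whence $\det(BM) = -(-2)^{n+1}(n+2)$. Since the edge product also evaluates to $n+2$ in this specialization, we conclude $C = -(-2)^{n+1}$, as claimed.

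The main obstacle is the divisibility step: one has to verify row proportionality column-by-column and keep track of the sign flip in the cyclic edge case (and explain, via the same sign analysis, why diagonals contribute no divisor). The final row reduction is mechanical once the convenient specialization has been chosen; more symmetric alternatives (for instance, vertices on a circle) would lead instead to determinants of the form $[|\sin(\theta_j-\theta_i)|]$, whose evaluation is essentially equivalent to the theorem itself and hence useless as an independent check.
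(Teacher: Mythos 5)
Your argument is essentially correct, but note that the paper does not reprove this statement at all: it quotes it from Baur--Marsh, whose original proof proceeds by categorification of frieze patterns via the cluster category of type $A_n$ (that is the content of \cite{BM}). Your route is a genuinely different, more elementary one: you push the identity through the Pl\"ucker realization $x_{ij}\mapsto p_{ij}=a_ib_j-a_jb_i$, prove that each edge coordinate $p_{i,i+1}$ (indices mod $n+3$) divides $\det(BM)$ by exhibiting row proportionality on the locus $v_{i+1}=\lambda v_i$ -- and your sign bookkeeping is right: the proportionality constant is uniformly $\lambda$ for a consecutive pair, uniformly $-\lambda$ for the wrap-around pair $\{n+2,0\}$, and genuinely non-uniform for a diagonal, which is exactly why only edges occur -- then finish by bidegree count and the specialization $a_i=1$, $b_i=i$, whose evaluation $\det(|i-j|)_{0\le i,j\le n+2}=-(-2)^{n+1}(n+2)$ and edge product $n+2$ correctly pin down the constant. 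What the categorical proof buys is a structural explanation (the matrix entries as specialized Caldero--Chapoton values and the determinant as a statement about the cluster category), and it generalizes in the directions Baur--Marsh pursue; what your proof buys is brevity and self-containedness, at the cost of two standard facts you should make explicit: (i) to transfer the identity back to $\mathcal{A}(A_n)$ you need the homomorphism to the Pl\"ucker ring to be \emph{injective}, which holds because $\mathcal{A}(A_n)$ with its frozen boundary variables is isomorphic to the homogeneous coordinate ring of $\operatorname{Gr}_2(\mathbb{Q}^{n+3})$ (this is quoted in the paper), not merely because the Ptolemy and Pl\"ucker relations match; and (ii) the divisibility step should be phrased as vanishing of $\det(BM)$ on the dense open part of the irreducible hypersurface $\{p_{i,i+1}=0\}$ where $v_i\neq 0$, hence on all of it, giving divisibility over $\mathbb{Q}$ and then over $\mathbb{Z}$ by Gauss's lemma since $p_{i,i+1}$ is primitive and the $p_{i,i+1}$ are pairwise non-associate irreducibles. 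With those two sentences added, the proof is complete and correct.
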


The purpose of this note is to discuss some related determinants. In the first instance we keep the ground ring $\mathcal{A}(A_n)$, and vary the construction of the matrix in analogy with a determinant named after Arthur Cayley and Karl Menger:  

\begin{theorem}[Cayley--Menger cluster matrix of type A]
Let $PM$ be the symmetric $(n+3)\times(n+3)$ matrix with entries $PM_{ij}=x^2_{ij}\in\mathcal{A}(A_n)$. Moreover, let $e=(1,1,\ldots,1)^T\in\mathbb{Z}^{n+3}$ be the column vector of all ones. 
We refer to the symmetric $(n+4)\times(n+4)$ matrix
\begin{align*}
CM=\left(
\begin{matrix}
PM & e\\
e^T &0
\end{matrix}
\right).
\end{align*}
with entries in $\mathcal{A}(A_n)$ as the Cayley--Menger cluster matrix of type $A_n$. If $n\geq 2$, then the matrix $CM$ and its principal submatrix $PM$ satisfy the relation $\operatorname{det}(PM)=\operatorname{det}(CM)=0$. 
\end{theorem}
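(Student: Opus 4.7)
The plan is to exploit the Plücker realization of $\mathcal{A}(A_n)$ recalled in the introduction. Writing the entries of the $2\times(n+3)$ matrix $X$ as $a_0,b_0,\ldots,a_{n+2},b_{n+2}$, one has $x_{ij}=a_i b_j-a_j b_i$ for all indices, so that the convention $x_{ii}=0$ is recovered for free; squaring and expanding gives
\begin{align*}
x_{ij}^{2}\;=\;a_{i}^{2}b_{j}^{2}\;+\;a_{j}^{2}b_{i}^{2}\;-\;2\,(a_{i}b_{i})(a_{j}b_{j}).
\end{align*}

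Introducing the length-$(n+3)$ column vectors $u=(a_i^2)_i$, $v=(b_i^2)_i$ and $w=(a_i b_i)_i$, the previous display translates into the matrix identity
\begin{align*}
PM\;=\;uv^{T}\,+\,vu^{T}\,-\,2\,ww^{T},
\end{align*}
presenting $PM$ as a sum of three rank-one matrices. Working over the field of fractions of $\mathbb{Q}[a_0,b_0,\ldots,a_{n+2},b_{n+2}]$, into which $\mathcal{A}(A_n)$ embeds, one concludes $\operatorname{rk}(PM)\leq 3$, and therefore $\det(PM)=0$ whenever $n+3>3$, which certainly covers the hypothesis $n\geq 2$.

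For the Cayley--Menger matrix $CM$ the plan is to bound its rank by inspecting columns. The first $n+3$ columns of $CM$ are of the form $\binom{P_j}{1}$ where $P_j$ is the $j$-th column of $PM$, so they span a subspace of dimension at most $\operatorname{rk}(PM)+1\leq 4$; adjoining the bordering column $\binom{e}{0}$ raises the dimension by at most one more. Hence $\operatorname{rk}(CM)\leq 5$, and for $n\geq 2$ the size $n+4\geq 6$ forces $\det(CM)=0$. No step in this argument looks genuinely hard; the only ingredient to spot is the three-term rank decomposition of $PM$, and the bound $n\geq 2$ is dictated precisely by the two-unit jump in rank potentially introduced by the bordering step in $CM$.
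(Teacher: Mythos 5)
Your argument is correct, but it takes a genuinely different route from the paper. You use the Pl\"ucker realization $x_{ij}\mapsto a_ib_j-a_jb_i$ (the identification of $\mathcal{A}(A_n)$ with the homogeneous coordinate ring of $\operatorname{Gr}_2(\mathbb{Q}^{n+3})$, which the paper states but does not reprove), square it to obtain the global rank-three decomposition $PM=uv^{T}+vu^{T}-2ww^{T}$ with $u=(a_i^2)$, $v=(b_i^2)$, $w=(a_ib_i)$, and then bound $\operatorname{rk}(CM)\leq 5$ by a column-span count. Two small points to make explicit: the minor is antisymmetric in $(i,j)$ while $x_{ij}=x_{ji}$ is symmetric, which is harmless only because you square before using the identity; and the conclusion $\det(PM)=\det(CM)=0$ inside $\mathcal{A}(A_n)$ rests on the injectivity of the realization map into the polynomial ring, i.e.\ precisely on the asserted isomorphism with the Grassmannian coordinate ring. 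The paper instead stays inside the surface formalism: it first embeds $\mathcal{A}(A_n)$ into a larger $\mathcal{A}(A_{n'})$ via a rooted cluster morphism so as to free up a reference vertex $r$, normalizes $\widetilde{x}_{ij}=x_{ij}/(x_{ri}x_{rj})$ so that the Ptolemy relations become the linear relations $\widetilde{x}_{ij}+\widetilde{x}_{jk}=\widetilde{x}_{ik}$, deduces that every $4\times 4$ minor of $PM$ (even with different row and column sets) is a combination of three rank-one matrices and hence vanishes, and then kills $\det(CM)$ via the bordered-determinant identity $\det\left(\begin{smallmatrix}A & u\\ u^{T} & \lambda\end{smallmatrix}\right)=\lambda\det(A)-u^{T}\operatorname{adj}(A)u$ with $\operatorname{adj}(PM)=0$. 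What your approach buys is directness and a one-line global rank bound $\operatorname{rk}(PM)\leq 3$, which in fact also recovers the paper's stronger statement that all minors of size at least $4$ vanish; what the paper's approach buys is independence from the Grassmannian model (only exchange/Ptolemy relations are used), which is the feature one would want when looking for generalizations to other surface cluster algebras.
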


The theorem is presented in more detail as Theorem \ref{Thm:CM} in Section \ref{Sec:Det}. For a proof we will fix a vertex $r$ and substitute $\widetilde{x}_{ij}=x_{ij}/(x_{ri}\cdot x_{rj})$ for all indices $i,j$ different from $r$. In this way we transform the exchange relations into linear equations of the form $\widetilde{x}_{ij}+\widetilde{x}_{jk}=\widetilde{x}_{ik}$ for all indices $i,j,k$ such that $r,i,j,k$ are pairwise different and lie on the boundary of the polygon in this order. These relations will allow us to write the matrix as a linear combination of rank $1$ matrices.

Now let us describe a result for the cluster algebra $\mathcal{A}(D_n)$ attached to a finite root system of type $D_n$. It can be modelled geometrically by an $n$-gon $\Sigma$ with vertices $1,\ldots,n$ together with a puncture $0$ in its interior. Here the cluster variables are indexed by tagged arcs, and for two different marked points $i,j\in\{0,1,\ldots,n\}$ there are exactly two tagged arcs between $i$ and $j$. Let $i,j\geq 1$ be marked points on the boundary. Suppose that the two (necessarily plain tagged) arcs $\alpha=\operatorname{im}(a)$ and $\beta=\operatorname{im}(b)$ between $i$ and $j$ are realized by curves $a,b\colon[0,1]\to\Sigma$ with $a(0)=b(1)=i$ and $a(1)=b(0)=j$ such that the concatenation $a \ast b$ is a positively oriented loop around $i$. Then we denote the cluster variable $x_{\alpha}\in\mathcal{A}(D_n)$ by $x_{ij}$ and the cluster variable $x_{\beta}\in\mathcal{A}(D_n)$ by $x_{ji}$. Moreover, we denote by $x_{0i}\in\mathcal{A}(D_n)$ the cluster variable associated with the plain tagged arc between $0$ and $i$ and by $x_{i0}\in\mathcal{A}(D_n)$ the cluster variable associated with the notched tagged arc between $0$ and $i$.

\begin{theorem}[Generalized Baur--Marsh matrix of type D] Let $\BMD$ be the symmetric $(n+1)\times(n+1)$ matrix with entries $\BMD_{ij}=x_{ij}\in\mathcal{A}(D_n)$ (where we interpret the diagonal entries $x_{ii}$ as zero). Then the determinant obeys the relation:
\begin{align*}
\operatorname{det}(\BMD)= (-1)^{n}\cdot \prod_{i=1}^{n} x_{0i}\cdot x_{i0}.
\end{align*}
\end{theorem}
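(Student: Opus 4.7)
My strategy follows the philosophy of the Cayley--Menger argument sketched earlier in this paper: I would choose the puncture as a root, rescale the cluster variables so that the type-$D_n$ exchange relations become linear, and then compute the determinant. The starting move is to write $\BMD$ as a block matrix,
\[
\BMD = \begin{pmatrix} 0 & u^T \\ v & \BMD' \end{pmatrix},
\]
with $u = (x_{01}, \ldots, x_{0n})^T$, $v = (x_{10}, \ldots, x_{n0})^T$, and $\BMD'$ the $n \times n$ block of boundary cluster variables (with zero diagonal). Cofactor expansion along the puncture row yields $\det(\BMD) = -v^T\,\mathrm{adj}(\BMD')\,u$, reducing the problem to evaluating this bilinear form.

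Next, I would introduce normalized entries $\widetilde{x}_{ij} = x_{ij}/(x_{0i}\, x_{0j})$ for boundary vertices $i,j$, together with a notched variant $\widetilde{x}_{ij}' = x_{ij}/(x_{i0}\, x_{j0})$. The relevant type-$D_n$ Ptolemy relation, which for three boundary vertices $i,j,k$ in cyclic order reads $x_{0j}\, x_{ik} = x_{0i}\, x_{jk} + x_{0k}\, x_{ij}$ (with the boundary arcs chosen in the family passing on one designated side of the puncture), becomes after division the cocycle identity $\widetilde{x}_{ik} = \widetilde{x}_{ij} + \widetilde{x}_{jk}$. Hence $\widetilde{x}_{ij} = y_j - y_i$ for some function $y$ on boundary vertices, and analogously $\widetilde{x}_{ij}' = y'_j - y'_i$ for the opposite-side arcs via a Ptolemy using notched puncture variables. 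Consequently $\BMD'$ decomposes into two rank-$2$ ``cocycle matrices'', diagonally scaled by the plain and notched puncture arcs respectively.

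Exploiting this rank-$2$ structure, one can compute $\mathrm{adj}(\BMD')$ explicitly. The key observation is that the vectors $u$ and $v$ are aligned with the plain and notched scaling diagonals, so the bilinear form $v^T\,\mathrm{adj}(\BMD')\, u$ telescopes through the cocycle identities and collapses to the product $\prod_{i=1}^n x_{0i}\, x_{i0}$; all ``mixed'' contributions cancel because each $y_j - y_i$ factor in the cocycle pairs with its telescoping partner in the expansion of the adjugate. The sign $(-1)^n$ arises from the alternating signs in the cofactor expansion together with orientation bookkeeping, and a base case such as $n = 3$ (where $\mathcal{A}(D_3) \cong \mathcal{A}(A_3)$ permits a direct check in the hexagon model) fixes the conventions.

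The main obstacle is the asymmetry of $\BMD'$: unlike in type A, for non-adjacent boundary vertices $i,j$ the cluster variables $x_{ij}$ and $x_{ji}$ correspond to \emph{different} arcs on opposite sides of the puncture, so the upper-triangular and lower-triangular entries of $\BMD'$ are governed by two distinct cocycles attached to the plain and notched puncture families. Verifying that these two cocycle structures interact cleanly with $u$ and $v$---producing the claimed product rather than a residue of uncancelled terms---requires careful orientation conventions on which of the two arcs between $i$ and $j$ is labelled $x_{ij}$ versus $x_{ji}$, and this is the technical core of the argument.
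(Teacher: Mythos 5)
The decisive gap is your second cocycle. The near-side family does linearize: for boundary vertices $i<j<k$ the Ptolemy relation for the quadrilateral $0,i,j,k$ gives $x_{0j}x_{ik}=x_{0i}x_{jk}+x_{0k}x_{ij}$, so $\widetilde{x}_{ij}=x_{ij}/(x_{0i}x_{0j})$ satisfies $\widetilde{x}_{ik}=\widetilde{x}_{ij}+\widetilde{x}_{jk}$ and is a coboundary $y_j-y_i$. The opposite-side arcs admit no such potential. By the lambda-length identities (Proposition \ref{rem:Useful}) one has $x_{ij}+x_{ji}=x_{i0}x_{0j}=x_{0i}x_{j0}$, hence $\widetilde{x}_{ij}+\widetilde{x}_{ji}=x_{i0}/x_{0i}=:c$, a quantity independent of $i,j$; so the far-side entries are $c-(y_j-y_i)$. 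Rescaling by the notched arcs only multiplies everything by $c^{-2}$, since $x_{i0}x_{j0}=c^{2}x_{0i}x_{0j}$, so writing the far-side entries as $y'_j-y'_i$ would force numbers $w_p$ with $w_i-w_j=c^{-1}$ for all $i<j$, which is impossible once $n\geq 3$. Equivalently, the notched Ptolemy identity your second cocycle needs, $x_{j0}x_{ki}=x_{i0}x_{kj}+x_{k0}x_{ji}$, is simply false: the two sides differ by the nonzero monodromy term $x_{i0}x_{j0}x_{0k}$, reflecting that the three far-side arcs of a triple do not bound a quadrilateral with the puncture. In truth the normalized boundary block is $(y_j-y_i)_{i,j}$ plus $c$ times the strictly lower-triangular all-ones matrix (rank $n-1$), not the ``two rank-$2$ cocycles'' your cancellation argument presupposes. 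Consequently the heart of your proof --- an explicit $\operatorname{adj}(\BMD')$ and the assertion that all mixed terms telescope away in the bordered expansion --- is unsupported, and that is exactly where the content of the theorem lies. (A minor further slip: the bordered-determinant formula gives $\operatorname{det}(\BMD)=-u^{T}\operatorname{adj}(\BMD')\,v$ with $u^{T}$ the puncture row and $v$ the puncture column; since $\BMD'$ is not symmetric, your $-v^{T}\operatorname{adj}(\BMD')\,u$ transposes the adjugate.)

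For comparison, the paper never touches the adjugate: using $x_{ij}+x_{ji}=x_{0i}x_{j0}$ together with the Ptolemy relations for the quadrilaterals $0,1,j,i$ and $0,1,i,j$, the row operations $\widetilde{r}_i=x_{01}\,r_i+x_{1i}\,r_0-x_{0i}\,r_1$ for $2\leq i\leq n$ annihilate all entries of row $i$ in column $0$ and in columns $j\geq i$, leaving an essentially triangular matrix whose determinant is read off directly and then divided by the factor $(x_{01})^{n-1}$ introduced by the operations. If you want to keep your normalization strategy, you must carry the monodromy term along explicitly (one cocycle plus $c$ times the lower-triangular ones matrix) and redo the adjugate computation honestly; as written, the proposal asserts rather than proves the cancellation, and rests on a structural claim about the far-side arcs that fails.
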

The theorem is presented in more detail as Theorem \ref{Thm:TypeD} in Section \ref{Sec:Det}. We give an elementary proof by performing row operations. Note that we may interpret the factor $x_{i0}\cdot x_{0i}$ on the right hand side of the theorem as the lambda length of the loop around the puncture $0$ with endpoint $i$.

\section{Surface cluster algebras}
\label{Sec:CA}

\subsection{Introduction to Fomin--Zelevinsky cluster algebras}
Cluster algebra theory started in 2002 with a seminal article by Fomin and Zelevinsky \cite{FZ}. By construction the \textit{cluster algebra} is a $\mathbb{Z}$-algebra defined by generators and relations. Let us recall the main steps in the construction of the generators and relations. First of all, we fix a natural number $n$ which is called the \textit{rank} of the cluster algebra. The distinguished generators of a cluster algebra are called \textit{cluster variables}, and certain tuples $\mathbf{x}=(x_1,x_2,\ldots,x_n)$ of cluster variables of cardinality $n$ are called \textit{clusters}. Moreover, every cluster $\mathbf{x}$ is endowed with a skew-symmetric (or more generally a skew-symmetrizable) integer $n\times n$ matrix $B=(b_{ij})_{1\leq i,j\leq n}$ which is called \textit{exchange matrix}. In this case, the pair $(\mathbf{x},B)$ is called a \textit{seed}. An essential notion is the \textit{mutation of seeds}. Here, a mutation of a seed $(\mathbf{x},B)$ at an index $k\in\{1,2,\ldots,n\}$ is a seed $\mu_k(\mathbf{x},B)=(\mathbf{x}',B')$ such that the new cluster $\mathbf{x}'=\left(x_1,\ldots,x_{k-1},x_k',x_{k+1},\ldots,x_n\right)$ is obtained from the old cluster $\mathbf{x}$ by replacing the cluster variable $x_k$ with another cluster variable $x_k'$. Both cluster variables are associated with each other by an exchange relation of the form 
\begin{align*}
x_k\cdot x_k'=\prod_{\genfrac{}{}{0pt}{}{1\leq i\leq n}{b_{ik}>0}}x_i^{b_{ik}}+\prod_{\genfrac{}{}{0pt}{}{1\leq j\leq n}{b_{jk}<0}}x_j^{-b_{jk}}
\end{align*}
where the exponents are the entries in the corresponding column of the matrix $B$. Moreover, there is a combinatorial rule relating the exchange matrices $B$ and $B'$. In this presentation we forego the definition of mutation of exchange matrices. Nevertheless let us remark that the construction is made in such a way that every mutation $\mu_k$ is an involution, i.\,e. the equation $(\mu_k\circ\mu_k)(\mathbf{x},B)=(\mathbf{x},B)$ holds for all seeds $(\mathbf{x},B)$ and all indices $k$. In the above situation we also write $B'=\mu_k(B)$. Given an initial seed $(\mathbf{x},B)$, the cluster algebra $\mathcal{A}(\mathbf{x},B)\subseteq\mathbb{Q}(x_1,x_2,\ldots,x_n)$ is defined to be the subalgebra generated by all cluster variables that belong to seeds generated from $(\mathbf{x},B)$ by sequences of mutations. In this situation the field $\mathbb{Q}(x_1,x_2,\ldots,x_n)$ of rational functions in the initial cluster variables is called the \textit{ambient field} of the cluster algebra $\mathcal{A}(\mathbf{x},B)$. Sometimes we freeze a subset of indices $F\subseteq \{1,2,\ldots,n\}$ and only allow mutations at non-frozen indices in $I=\{1,2,\ldots,n\}\backslash F$. In this case it is enough to keep track of the submatrix of the exchange matrix on rows $\{1,2,\ldots,n\}$ and columns $I$.

A main theorem about cluster algebras is Fomin--Zelevinsky's Laurent phenomenon \cite{FZ}, which asserts that every cluster variable in a cluster algebra $\mathcal{A}(\mathbf{x},B)$ can be written as a Laurent polynomial in the cluster variables $x_1,x_2,\ldots,x_n$ of the initial cluster $\mathbf{x}$. Another main theorem is Fomin--Zelevinsky's classification \cite{FZ2} of cluster algebras with finitely many cluster variables by finite type root systems. If a cluster algebra $\mathcal{A}(\mathbf{x},B)$ admits only finitely many cluster variables, then the non-initial cluster variables are in bijection with the positive roots in a finite root system. More precisely, by the Laurent phenomenon every non-initial cluster variable may be written as $P(x_1,x_2,\ldots,x_n)/(x_1^{a_1}\cdot x_2^{a_2}\cdot\ldots\cdot x_n^{a_n})$ with a polynomial $P\in\mathbb{Z}[x_1,x_2,\ldots,x_n]$ and natural numbers $a_1,a_2,\ldots,a_n$. The bijection assigns to this cluster variable the positive root $a_1\alpha_1+a_2\alpha_2+\ldots+a_n\alpha_n$ where $\alpha_1,\alpha_2,\ldots,\alpha_n$ are the simple roots in the corresponding root system. Recall that there are seven different types of finite root systems called $A,B,C,D,E,F,G$. In this language cluster algebras of type $A,D$ or $E$ arise from skew-symmetric exchange matrices whereas cluster algebras of type $B,C,F$ or $G$ arise from skew-symmetrizable but not skew-symmetric exchange matrices. A third main theorem is Felikson--Shapiro--Tumarkin's classification \cite{FST} of cluster algebras with finitely many exchange matrices. 

Cluster algebra structures appear in representation theory, topology, geometry and number theory. For example, in the context of representation theory of Lie algebras Lusztig's canonical bases often carry cluster algebra structures. Canonical bases were one of the motivating examples to introduce the formalism of cluster algebras. Another important instance of cluster algebras arises in the context of geometry and topology. Building on work of Gekhtman--Shapiro--Vainshtein \cite{GSV}, Fock--Goncharov \cite{FG1,FG2} and Penner \cite{P2}, Fomin--Shapiro--Thurston \cite{FST,FT} constructed cluster algebra structures from marked oriented surfaces. Surface cluster algebras are interesting for many reasons, for example due to their prominent role in Felikson--Shapiro--Tumarkin's classification of cluster algebras with finitely many exchange matrices. Here, the mutation class of an exchange matrix attached to a surface is always mutation-finite and so is every $2\times 2$ exchange matrix. The classification implies that aside from these there are only eight mutation-finite skew-symmetric, connected exchange matrices up to mutation-equivalence.

\subsection{Marked oriented surfaces and their arcs and triangulations}

In the next sections we review Fomin--Shapiro--Thurston's construction of cluster algebras from surfaces \cite{FST,FT}. Let $\Sigma$ be a connected oriented surface, possibly with boundary. Up to homeomorphism we may assume that $\Sigma\cong\Sigma_{g,b}$ is a connected closed surface $\Sigma_{g}$ of genus $g$ with $b$ disks removed. A particular case of interest is the surface $\Sigma=\Sigma_{0,1}$, where $\Sigma\cong D$ is homeomorphic to a disk. Let $M\subseteq\overline{\Sigma}$ be a finite nonempty set of points in the closure of $\Sigma$ such that every boundary component of $\Sigma$ contains at least one point in $M$. The elements in $M$ are called \textit{marked points}, marked points in the interior of $\Sigma$ are called \textit{punctures}, and the pair $(\Sigma,M)$ is called a \textit{marked oriented surface}. When $\Sigma=\Sigma_{0,1}\cong D$ is a disk and the number of marked points on the boundary $\lvert M\cap\partial \Sigma\rvert =n$, we call the pair $(\Sigma,M)$ an $n$-\textit{gon}. Moreover, we refer to the special cases $n=1,2,3,4$ as \textit{monogon}, \textit{digon}, \textit{triangle} and \textit{quadrilateral}. To avoid technical difficulties we impose the following conditions: 
\begin{itemize}
\item[(a)] If $\Sigma=\Sigma_{0,0}$ is a sphere, then $M$ contains at least $4$ punctures.
\item[(b)] If $(\Sigma,M)$ is a monogon, then $M$ contains at least $2$ punctures.
\item[(c)] If $(\Sigma,M)$ is a digon or a triangle, then $M$ contains at least $1$ puncture.
\end{itemize}
The following definitions are crucial:

\begin{defn}[Arcs]
The image $\alpha=\operatorname{im}(a)$ of a curve $a\colon [0,1]\to \Sigma$ is called an \textit{arc} in $(\Sigma,M)$ if the following conditions hold: 
\begin{itemize}
\item[(a)] The map $a$ restricts to an injective map $a\vert_{(0,1)}\colon (0,1)\to \Sigma\backslash M$.
\item[(b)] The endpoints $a(0)$ and $a(1)$ lie in $M$.
\item[(c)] The arc $\alpha$ does not cut out an unpunctured monogon or an unpunctured digon. 
\end{itemize}
\end{defn}

\begin{defn}[Properties of arcs] An arc $\alpha=\operatorname{im}(a)$ is called a \textit{loop} if the endpoints $a(0)=a(1)$ coincide. Two arcs $\alpha$ and $\beta$ are called \textit{isotopic} if $\alpha=\operatorname{im}(a)$ and $\beta=\operatorname{im}(b)$ for some curves $a,b$ that are homotopic to each other in $\Sigma\backslash M$ relative to the endpoints $a(0)=b(0)$ and $a(1)=b(1)$. An arc $\alpha=\operatorname{im}(a)$ is called a \textit{boundary arc} if it is isotopic to an arc in $\partial \Sigma$. Otherwise it is called an \textit{inner arc}. We say that two arcs $\alpha,\beta$ are \textit{non-crossing} if $\alpha$ is isotopic to some arc $\alpha'$ and $\beta$ is isotopic to some arc $\beta'$ such that $\alpha'$ and $\beta'$ do not intersect except possibly at endpoints. They are called \textit{crossing} otherwise. 
\end{defn}

Often we do not distinguish between isotopic arcs. 

\begin{defn}[Triangulations] A \textit{triangulation} $\mathcal{T}$ of the marked oriented surface $(\Sigma,M)$ is a maximal collection of pairwise non-isotopic arcs with disjoint interiors. The connected components of the complement $\Sigma\backslash\bigcup_{\alpha\in \mathcal{T}}\{\alpha\}$ are called \textit{triangles} of $\mathcal{T}$. A triangle bounded by only two distinct sides as in the left hand side of Figure \ref{Figure:SelfFoldedTriangle} is called \textit{self-folded}. In this situation we call the arc $\alpha$ with endpoints $0,1$ the \textit{radius} of the self-folded triangle.
\end{defn}

\begin{figure}
\begin{center}
\scalebox{.9}{
\begin{tikzpicture}

    \newcommand{\verticaldistance}{1.7}
    \newcommand{\horizontaldistance}{2.5}
    \newcommand{\labeldistance}{0.05}
    
    \node (A) at (-\horizontaldistance,0) {$\bullet$};
    \node (B) at (-\horizontaldistance,-\verticaldistance) {$\bullet$};
    
    \node (C) [above=\labeldistance of A] {$0$};
    \node (D) [below=\labeldistance of B] {$1$};
    \node (P) at (-\horizontaldistance+6*\labeldistance,-0.5*\verticaldistance) {$\alpha$};
    \node (R) at (-\horizontaldistance,-2*\verticaldistance-10*\labeldistance) {$\eta$};
    
    \path[thick] (-\horizontaldistance,0) edge (-\horizontaldistance,-\verticaldistance);
    \draw[thick] (-\horizontaldistance,-\verticaldistance) ellipse (2 and \verticaldistance);
    
    \node (E) at (\horizontaldistance,0) {$\bullet$};
    \node (F) at (\horizontaldistance,-\verticaldistance) {$\bullet$};
    
    \node (G) [above=\labeldistance of E] {$0$};
    \node (H) [below=\labeldistance of F] {$1$};
    \node (I) [above=\labeldistance of F] {$\bowtie$};
    \node (Q) at (\horizontaldistance+6*\labeldistance,-0.5*\verticaldistance) {$\overline{\alpha}$};
    \node (S) at (\horizontaldistance,-2*\verticaldistance-10*\labeldistance) {$\eta$};
    
    \path[thick] (\horizontaldistance,0) edge (\horizontaldistance,-\verticaldistance);
     \draw[thick] (\horizontaldistance,-\verticaldistance) ellipse (2 and \verticaldistance);
     
    \node (T) [inner sep=0pt,outer sep=0pt] at (3*\horizontaldistance,0) {$\bullet$};
    \node (U) at (3*\horizontaldistance,-\verticaldistance) {$\bullet$};
    \node (V) at (3*\horizontaldistance,-2*\verticaldistance) {$\bullet$};
    
    \node (W) [above=\labeldistance of T] {$1$};
    \node (X) [right=\labeldistance of U] {$0$};
    \node (Y) [below=\labeldistance of V] {$2$};
    
    \path[thick] (3*\horizontaldistance,0) edge (3*\horizontaldistance,-2*\verticaldistance);
    \draw[thick] (3*\horizontaldistance,-\verticaldistance) ellipse (2 and \verticaldistance);
    \draw[thick] (T) to [out=225,in=315,looseness=75] (T);
    
    \node (a) at (3*\horizontaldistance+2+6*\labeldistance,-\verticaldistance) {$\alpha$};
    \node (b) at (3*\horizontaldistance-2-6*\labeldistance,-\verticaldistance) {$\beta$};
    \node (c) at (3*\horizontaldistance+6*\labeldistance,-0.6*\verticaldistance) {$\gamma$};
    \node (d) at (3*\horizontaldistance+6*\labeldistance,-1.7*\verticaldistance) {$\delta$};
    \node (e) at (3*\horizontaldistance+20*\labeldistance,-1.4*\verticaldistance) {$\eta$};

\end{tikzpicture}}
\captionsetup{width=0.8\textwidth}
\caption{A self-folded triangle with a plain tagged arc (left), a self-folded triangle with a notched tagged arc (middle), and arcs in a punctured digon (right)}
\label{Figure:SelfFoldedTriangle}
\end{center}
\end{figure}
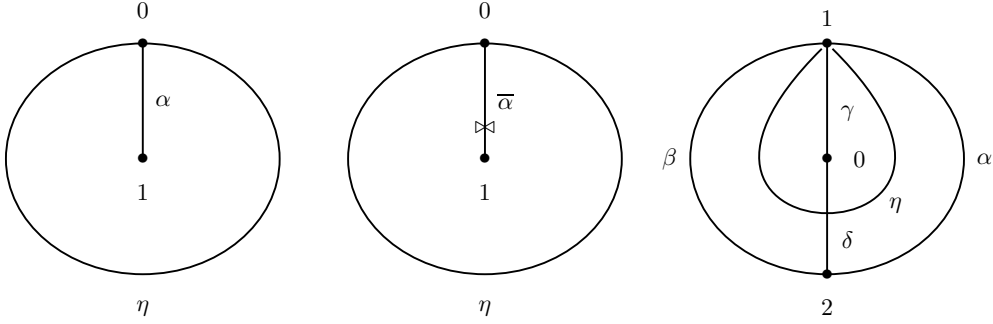

Every triangulation $\mathcal{T}$ may be written as the union $\mathcal{T}=\mathcal{T}^i\cup\mathcal{T}^b$ of the set of inner arcs of $\mathcal{T}$ and the set of boundary arcs of $\mathcal{T}$. Note that every triangulation contains all boundary arcs.

\subsection{Exchange matrices associated with triangulated surfaces}

Let $\mathcal{T}$ be a triangulation of a marked oriented surface $(\Sigma,M)$. If $\alpha\in\mathcal{T}$ is neither the radius of a self-folded triangle nor a boundary arc, then up to isotopy there are exactly two arcs that can be appended to $\mathcal{T}\backslash\{\alpha\}$ to form a triangulation, namely $\alpha$ itself and another arc $\alpha'$ not isotopic to $\alpha$. 

\begin{defn}[Flips of triangulations]
In the above situation we say that the arc $\alpha$ is \textit{flippable} and we define the \textit{flip} of the triangulation $\mathcal{T}$ at the flippable arc $\alpha$ to be $F_{\alpha}(\mathcal{T})=(\mathcal{T}\backslash\{\alpha\})\cup\{\alpha'\}$. 
\end{defn}

\begin{defn}[Exchange matrices of triangulations]
For every not self-folded triangle $\Delta$ in the complement $\Sigma\backslash\bigcup_{\alpha\in \mathcal{T}}\{\alpha\}$ we construct a skew-symmetric matrix $B^{\Delta}=(b^{\Delta}_{\alpha,\beta})$ indexed by $\mathcal{T}\times\mathcal{T}$. The entries of $B^{\Delta}$ are zero except for $b_{\gamma,\delta}^{\Delta}=b_{\delta,\epsilon}^{\Delta}=b_{\epsilon,\gamma}^{\Delta}=1$ and $b_{\delta,\gamma}^{\Delta}=b_{\epsilon,\delta}^{\Delta}=b_{\gamma,\epsilon}^{\Delta}=-1$, where $\gamma$, $\delta$ and $\epsilon$ are the three elements in $\mathcal{T}$ obtained by parsing the positive orientation of the boundary $\partial \Delta$ in this order. We define $B^{\mathcal{T}}$ to be the submatrix indexed by $\mathcal{T}\times\mathcal{T}^i$ of the matrix $\sum B^{\Delta}$ where the sum runs over all not self-folded triangles $\Delta\in\Sigma\backslash\bigcup_{\alpha\in \mathcal{T}}\{\alpha\}$. 
\end{defn}

Note that the submatrix of $B^{\mathcal{T}}$ indexed by $\mathcal{T}^i\times\mathcal{T}^i$ is skew-symmetric. Hence $\mathcal{B}^{\mathcal{T}}$ is an exchange matrix in the sense of Fomin--Zelevinsky. A crucial observation asserts that if $\mathcal{T}'=F_{\alpha}(\mathcal{T})$ is the flip of $\mathcal{T}$ at a flippable arc $\alpha$, then the matrix $B^{\mathcal{T}'}$ agrees with the mutated exchange matrix $\mu_{\alpha}(B^{\mathcal{T}})$. To extend the notion of a flip to radii, Fomin--Shapiro--Thurston introduced a tagging of arcs:

\begin{defn}[Tagged arcs] A \textit{tagged arc} $(\alpha,t)$ is an ordinary arc $\alpha=\operatorname{im}(a)$ together with a tagging $t$ of the endpoints $a(0)$ and $a(1)$ as \textit{plain} or \textit{notched} such that the following conditions hold: 
\begin{itemize}
\item[(a)] A marked point on the boundary $\partial \Sigma$ receives only plain labels. 
\item[(b)] If $\alpha$ is a loop, then its endpoint receives the same label on both sides: $t(a(0))=t(a(1))$.
\item[(c)] The arc does not cut out a monogon with exactly one puncture. 
\end{itemize}
\end{defn}
To every ordinary arc $\alpha$ we associate a tagged arc $\tau(\alpha)$ in the following way: Since $(\Sigma,M)$ is not a sphere with $1$, $2$ or $3$ punctures, an arc $\alpha$ cuts out at most one monogon with exactly one puncture. Firstly, if $\alpha$ does not cut out a monogon with exactly one puncture, then $\tau(\alpha)$ is equal to $\alpha$ with both endpoints tagged plain. Secondly, if a loop $\alpha$ around a marked point $0$ does cut out a monogon with exactly one puncture $1$, then $\tau(\alpha)$ is equal to the radius of the corresponding self-folded triangle where $0$ is tagged plain and $1$ is tagged notched. For this reason we often we omit the tagging $t$ and write $\alpha$ instead of $(\alpha,t)$ for brevity. Moreover, we identify tagged arcs whenever the underlying ordinary arcs are isotopic and the common endpoints receive the same labels.

\begin{defn}[Compatibility of tagged arcs and tagged triangulations] \begin{itemize}
\item[(a)] Two tagged arcs $\alpha$ and $\beta$ are called \textit{compatible} if the following conditions hold: 
\begin{itemize}
\item[(i)] The underlying ordinary arcs of $\alpha$ and $\beta$ are non-crossing.
\item[(ii)] If the underlying ordinary arcs of $\alpha$ and $\beta$ are not isotopic and have an endpoint $P$ in common, then $P$ receives the same label from $\alpha$ and $\beta$. 
\item[(iii)] If the underlying ordinary arcs of $\alpha$ and $\beta$ are isotopic, then at least one endpoint receives the same label from both $\alpha$ and $\beta$.
\end{itemize}
\item[(b)] A \textit{tagged triangulation} of the marked oriented surface $(\Sigma,M)$ is a maximal collection of pairwise non-isotopic and compatible tagged arcs. 
\end{itemize}
\end{defn}

It can be shown that all tagged triangulations of $(\Sigma,M)$ have the same cardinality and contain all the (necessarily plain-tagged) boundary arcs. Moreover, if a tagged inner $\alpha\in\mathcal{T}$ is contained in a tagged triangulation, then there is a unique tagged arc $\alpha'$ different from $\alpha$ which can be appended to $\mathcal{T}\backslash\{\alpha\}$ to form a tagged triangulation. We define the \textit{flip} of $\mathcal{T}$ at $\alpha$ as $F_{\alpha}(\mathcal{T})=(\mathcal{T}\backslash\{\alpha\})\cup\{\alpha'\}$. 

\begin{ex}[Once punctured triangle] Let us consider a triangle with one puncture $0$ and three marked points $1,2,3$ on the boundary. We can embed the surface in $\mathbb{R}^2$ and orient it counterclockwise. The left hand side of Figure \ref{Figure:Triangulation} displays a triangulation of $(\Sigma,M)$ with inner arcs $\alpha$, $\beta$, $\gamma$ and boundary arcs $\delta$, $\epsilon$, $\digamma$. The triangulation admits three triangles $\Delta_1=(023)$, $\Delta_2=(031)$ and $\Delta_3=(012)$ none of which is self-folded. Parsing the boundary $\partial \Delta_1$ in counterclockwise order yields the cycle $\beta\to\delta\to\gamma\to\beta$ so that $b_{\beta,\delta}^{\Delta_1}=b_{\delta,\gamma}^{\Delta_1}=b_{\gamma,\beta}^{\Delta_1}=1$ and $b_{\delta,\beta}^{\Delta_1}=b_{\gamma,\delta}^{\Delta_1}=b_{\beta,\gamma}^{\Delta_1}=-1$. We obtain
\begin{align*}
B^{\mathcal{T}}=
\left(\begin{matrix}0&0&0\\0&0&-1\\0&1&0\\0&-1&1\\0&0&0\\0&0&0\end{matrix}\right)+
\left(\begin{matrix}0&0&1\\0&0&0\\-1&0&0\\0&0&0\\1&0&-1\\0&0&0\end{matrix}\right)+
\left(\begin{matrix}0&-1&0\\1&0&0\\0&0&0\\0&0&0\\0&0&0\\-1&1&0\end{matrix}\right)=
\left(\begin{matrix}0&-1&1\\1&0&-1\\-1&1&0\\0&-1&1\\1&0&-1\\-1&1&0\end{matrix}\right)
\end{align*}  
The flip of $\mathcal{T}$ at arc $\alpha$ produces a plain-tagged arc $\alpha'$ with endpoints $2$ and $3$ which is shown on the right hand side of Figure \ref{Figure:Triangulation}. 
\end{ex}

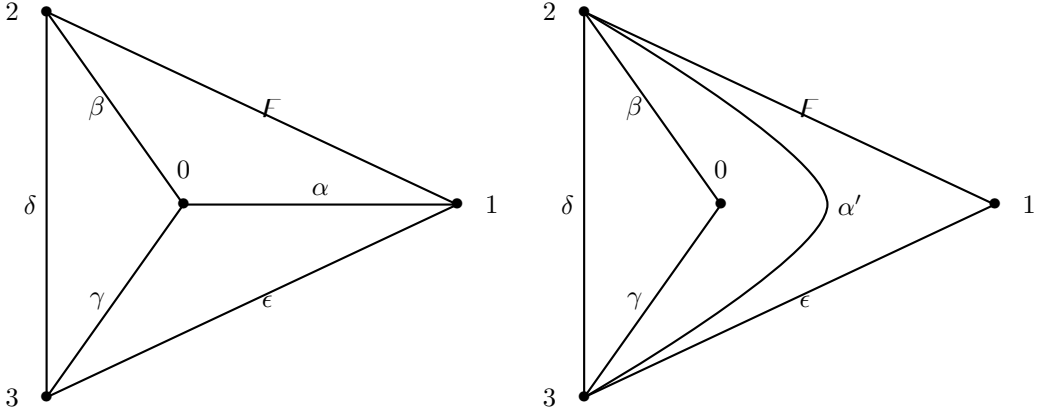
\begin{figure}
\begin{center}
\begin{tikzpicture}

    \newcommand{\length}{3.6}
    \newcommand{\labeldistance}{0.3mm}

    \node (0) at (0,0) {$\bullet$};
    \node (1) at (\length,0) {$\bullet$};
    \node (2) at (-0.5*\length,0.71*\length) {$\bullet$}; 
    \node (3) at (-0.5*\length,-0.71*\length) {$\bullet$}; 
    
    \node (L0) [above=\labeldistance of 0] {$0$};
    \node (L1) [right=\labeldistance of 1] {$1$};
    \node (L2) [left=\labeldistance of 2] {$2$};
    \node (L3) [left=\labeldistance of 3] {$3$};
    
    \path[-,thick] (0,0) edge node[above] {$\alpha$} (\length,0); 
    \path[-,thick] (0,0) edge node[left] {$\beta$} (-0.5*\length,0.71*\length); 
    \path[-,thick] (0,0) edge node[left] {$\gamma$} (-0.5*\length,-0.71*\length); 
    
    \path[-,thick] (-0.5*\length,0.71*\length) edge node[left] {$\delta$} (-0.5*\length,-0.71*\length); 
    \path[-,thick]  (\length,0) edge node[below,right] {$\epsilon$}(-0.5*\length,-0.71*\length);
    \path[-,thick] (-0.5*\length,0.71*\length) edge node[above,right] {$\digamma$} (\length,0);  

\end{tikzpicture}\quad
\begin{tikzpicture}

    \newcommand{\length}{3.6}
    \newcommand{\labeldistance}{0.3mm}
    \newcommand{\bending}{1.4}

    \node (0) at (0,0) {$\bullet$};
    \node (1) at (\length,0) {$\bullet$};
    \node (2) at (-0.5*\length,0.71*\length) {$\bullet$}; 
    \node (3) at (-0.5*\length,-0.71*\length) {$\bullet$}; 
    
    \node (L0) [above=\labeldistance of 0] {$0$};
    \node (L1) [right=\labeldistance of 1] {$1$};
    \node (L2) [left=\labeldistance of 2] {$2$};
    \node (L3) [left=\labeldistance of 3] {$3$};
    
    \node (A) at (\bending+0.3,0) {$\alpha'$};
    
    \draw[thick] plot [smooth, tension=0.5] coordinates { (-0.5*\length,0.71*\length) (\bending,0) (-0.5*\length,-0.71*\length)};
    \path[-,thick] (0,0) edge node[left] {$\beta$} (-0.5*\length,0.71*\length); 
    \path[-,thick] (0,0) edge node[left] {$\gamma$} (-0.5*\length,-0.71*\length); 
    
    \path[-,thick] (-0.5*\length,0.71*\length) edge node[left] {$\delta$} (-0.5*\length,-0.71*\length); 
    \path[-,thick]  (\length,0) edge node[below,right] {$\epsilon$}(-0.5*\length,-0.71*\length);
    \path[-,thick] (-0.5*\length,0.71*\length) edge node[above,right] {$\digamma$} (\length,0);  

\end{tikzpicture}
\end{center}
\caption{Two triangulations of a marked oriented surface}
\label{Figure:Triangulation}
\end{figure}

\subsection{Seeds of surface cluster algebras}

With the marked oriented surface $(\Sigma,M)$ Fomin--Shapiro--Thurston associate a cluster algebra $\mathcal{A}(\Sigma,M)$. Here, the cluster variables are in bijection with the tagged arcs of $(\Sigma,M)$. In particular, with every tagged arc $\alpha$ we associate a cluster variable $x_{\alpha}\in\mathcal{A}(\Sigma,M)$. We say that the tuple $\mathbf{x}=(x_{\alpha_1},x_{\alpha_2},\ldots, x_{\alpha_n})$ of cluster variables forms a cluster if the corresponding tagged arcs $\alpha_1,\alpha_2,\ldots, \alpha_n$ form a tagged triangulation $\mathcal{T}$ of $(\Sigma,M)$. We complete $\mathbf{x}$ by the exchange matrix $B^{\mathcal{T}}$ to obtain a seed $(\mathbf{x},B^{\mathcal{T}})$. A fundamental observation asserts that mutations in $\mathcal{A}(\Sigma,M)$ are induced by flips $F_{\alpha}(\mathcal{T})=(\mathcal{T}\backslash\{\alpha\})\cup\{\alpha'\}$ of tagged triangulations $\mathcal{T}$ at tagged inner arcs $\alpha$. In particular, the exchange relation attached to the above flip is the polynomial relation
\begin{align}
\label{Exchange}
x_{\alpha}\cdot x_{\alpha'}=\prod_{\genfrac{}{}{0pt}{}{\beta\in\mathcal{T}}{b^{\mathcal{T}}_{\alpha,\beta}>0}} x_{\beta}^{b^{\mathcal{T}}_{\alpha,\beta}}+\prod_{\genfrac{}{}{0pt}{}{\gamma\in\mathcal{T}}{b^{\mathcal{T}}_{\alpha,\gamma}<0}} x_{\gamma}^{-b^{\mathcal{T}}_{\alpha,\gamma}}.
\end{align}

In this article we are mainly interested in cluster algebras attached to the following two marked oriented surfaces which are also shown in Figure \ref{Figure:AD}:

\begin{ex}
\label{ref:MainExamples}
\begin{itemize}
\item[(a)] (Cluster algebra of type $A$) In the first case we pick a natural number $n\geq 0$ and consider the disk $\Sigma=D$ with a set $M\subseteq \partial D$ of $n+3$ pairwise different points on the boundary. We may realize $(\Sigma,M)$ as an $(n+3)$-gon in $\mathbb{R}^2$, and we label the marked points in this order by the natural numbers $0,1,\ldots,n+2$. Then every arc is isotopic to a line segment $(ij)$ connecting two different points $i,j\in M$, and such an arc has to be tagged plain at both endpoints. A possible triangulation consists of the boundary arcs together with the diagonals $(0j)$ with $j\in\{2,3,\ldots,n+1\}$. The corresponding cluster algebra $\mathcal{A}(\Sigma,M)$ contains only finitely many cluster variables because the marked oriented surface contains only finitely many tagged arcs. In fact, the cluster algebra is of type $A_n$ and for this reason we will denote it by $\mathcal{A}(A_n)$. For an arc $\alpha=(ij)$ we sometimes write $x_{ij}$ or $x_{i,j}$ instead of $x_{\alpha}$. Note that the elements $x_{ij}$ attached to boundary arcs $(i,j)\in\{(r,r+1)\colon 0\leq r\leq n+2\}\cup\{(0,n+3)\}$ are frozen variables whereas the elements $x_{ij}$ attached to flippable inner arcs $(i,j)$ are cluster variables.

\item[(b)] (Cluster algebra of type $D$) The second case of interest is the disk $\Sigma=D$ with one puncture $0$ and $n$ marked points $1,2,\ldots,n$ on $\partial D$. We orient the surface counterclockwise as a subset of $\mathbb{R}^2$ and we assume that $1,2,\ldots,n$ lie on $\partial D$ in this order. Note that for two points $i,j\in\partial D$ there are up to isotopy two arcs between $i$ and $j$, and both arcs have to be tagged plain at both endpoints. If the two arcs are given as the images $\alpha=\operatorname{im}(a)$ and $\beta=\operatorname{im}(b)$ of two curves $a,b\colon [0,1]\to \Sigma$ such that $a(0)=b(1)=i$, $a(1)=b(0)=j$ and the concatenation $a \ast b$ is a positively oriented loop around the marked point $i$, then we abbreviate $\alpha=(ij)$ and $\beta=(ji)$. For any point $j\in \partial M$ there is only one ordinary arc from $0$ to $j$ up to isotopy. The arc can be tagged at the puncture $0$ in two ways. In this case we denote by $(0j)$ the plain tagged arc between $0$ and $j$, and by $(j0)$ the notched tagged arc between $0$ and $j$. A possible tagged triangulation consists of the boundary arcs together with the plain tagged arcs $(0j)$ connecting $0$ with the vertices $j\in\{1,2,\ldots,n\}$. As in the previous example the marked oriented surface admits only finitely many tagged arcs. Hence the cluster algebra admits only finitely many cluster variables. In fact, it is a cluster algebra of type $D_n$ and we will denote by $\mathcal{A}(D_n)$ in the following sections. For a tagged arc $\alpha=(ij)$ we sometimes write $x_{ij}$ or $x_{i,j}$ instead of $x_{\alpha}$. As before, boundary arcs correspond to frozen variables whereas flippable inner arcs correspond to mutable cluster variables.

\end{itemize}
\end{ex}

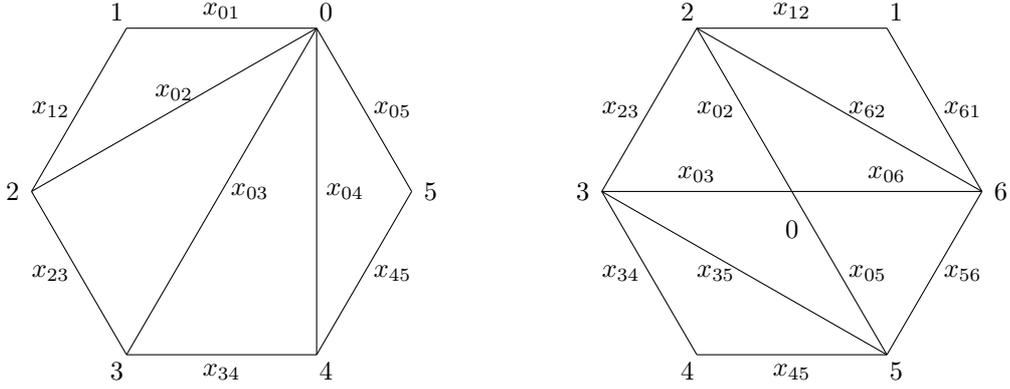
\begin{figure}
\begin{center}
\newcommand{\dist}{3}
\newcommand{\sca}{1.1}
\begin{tikzpicture}[auto,scale=2.5]
    \node at (\sca*0.5,\sca*0.866) {$0$};
    \node at (-\sca*0.5,\sca*0.866) {$1$};
    \node at (-\sca,0) {$2$};
    \node at (-0.5*\sca,-0.866*\sca) {$3$};
    \node at (0.5*\sca,-0.866*\sca) {$4$};
    \node at (1*\sca,0) {$5$};

    \path[-] (1,0) edge node[above,right] {$x_{05}$} (0.5,0.866);
    \path[-] (0.5,0.866) edge node[above] {$x_{01}$} (-0.5,0.866);
    \path[-] (-0.5,0.866) edge node[above,left] {$x_{12}$} (-1,0);
    \path[-] (-1,0) edge node[below,left] {$x_{23}$} (-0.5,-0.866);
    \path[-] (-0.5,-0.866) edge node[below] {$x_{34}$} (0.5,-0.866);
    \path[-] (0.5,-.866) edge node[below,right] {$x_{45}$} (1,0);
    
    \path[-] (-1,0) edge node[above] {$x_{02}$} (0.5,0.866);
    \path[-] (-0.5,-0.866) edge node[right] {$x_{03}$} (0.5,0.866);
    \path[-] (0.5,-0.866) edge node[right] {$x_{04}$} (0.5,0.866);
    
    \node at (\dist+\sca*0.5,\sca*0.866) {$1$};
    \node at (\dist-\sca*0.5,\sca*0.866) {$2$};
    \node at (\dist-\sca,0) {$3$};
    \node at (\dist,-0.2) {$0$};
    \node at (\dist-0.5*\sca,-0.866*\sca) {$4$};
    \node at (\dist+0.5*\sca,-0.866*\sca) {$5$};
    \node at (\dist+1*\sca,0) {$6$};
    
    \path[-] (\dist+1,0) edge node[above,right] {$x_{61}$} (\dist+0.5,0.866);
    \path[-] (\dist+0.5,0.866) edge node[above] {$x_{12}$} (\dist-0.5,0.866);
    \path[-] (\dist-0.5,0.866) edge node[above,left] {$x_{23}$} (\dist-1,0);
    \path[-] (\dist-1,0) edge node[below,left] {$x_{34}$} (\dist-0.5,-0.866);
    \path[-] (\dist-0.5,-0.866) edge node[below] {$x_{45}$} (\dist+0.5,-.866);
    \path[-] (\dist+0.5,-.866) edge node[below,right] {$x_{56}$} (\dist+1,0);
    
    \path[-] (\dist+1,0) edge node[above,right] {$x_{62}$} (\dist-0.5,0.866);
    \path[-] (\dist,0) edge node[above,left] {$x_{02}$} (\dist-0.5,0.866);
    \path[-] (\dist,0) edge node[above] {$x_{03}$} (\dist-1,0);
    \path[-] (\dist-1,0) edge node[below,left] {$x_{35}$} (\dist+0.5,-0.866);
    \path[-] (\dist,0) edge node[below,right] {$x_{05}$} (\dist+0.5,-.866);
    \path[-] (\dist,0) edge node[below,above] {$x_{06}$} (\dist+1,0);

\end{tikzpicture}
\end{center}
\caption{Seeds of two surface cluster algebras associated with a hexagon}
\label{Figure:AD}
\end{figure}

The exchange relation (\ref{Exchange}) has an interesting geometric meaning. Suppose that the ordinary arcs $\alpha$, $\beta$, $\gamma$ and $\delta$ form a quadrilateral with no marked points in the interior. We denote the two diagonal arcs inside the quadrilateral by $\epsilon$ and $\digamma$. Then the exchange relation reads $x_{\alpha}x_{\gamma}+x_{\beta}x_{\delta}=x_{\epsilon}x_{\digamma}$. We may view this relation as an analogue of Ptolemy's theorem from Euclidean geometry: if four points $A,B,C,D\in\mathbb{R}^2$ lie on a circle in this order, then the Euclidean distances obey the relation $\lvert AB\rvert\cdot\lvert CD\rvert+\lvert BC\rvert\cdot\lvert DA\rvert=\lvert AC\rvert\cdot\lvert BD\rvert$. Note that in the cluster algebra less relations hold than in Euclidean geometry. For example, in the above situation we have $\vert AC\vert/\vert BD\vert=(\vert BC\vert\cdot\vert CD\vert+\vert DA\vert\cdot\vert AB\vert)/(\vert AB\vert\cdot\vert BC\vert+\vert CD\vert\cdot\vert DA\vert)$, but the corresponding cluster variables do not satisfy a similar relation.

Fomin--Thurston \cite{FT} give the cluster variables a geometric interpretation in terms of Penner's \textit{lambda lengths} from Teichm\"uller theory. Recall that the \textit{Teichm\"uller space} $T(\Sigma,M)$ is the set of isotopy classes of complete hyperbolic metrics of finite area on $\Sigma\backslash M$ with geodesic boundary on $\partial\Sigma\backslash M$. Here we say that two hyperbolic metrics are \textit{isotopic} if we can obtain one metric from the other by a diffeomorphism of $\Sigma$ that fixes $M$ and is homotopic to the identity. Suppose that $m\in T(\Sigma,M)$ is such a hyperbolic metric. Then every arc is isotopic to a unique geodesic. This geodesic may have infinite hyperbolic length if its endpoints belong to $M$. In contrast, Penner's lambda lengths assign to every arc between marked points $i,j\in M$ a finite length, using the notion of horocycles. Here, a \textit{horocyle} $h$ around a marked point $i$ (with respect to the hyperbolic metric $m$) is a curve which is perpendicular to every geodesic ending in $i$. Note that a horocycle is a closed curve and it is determined by its center $i$ and its hyperbolic length $l(h)$. A \textit{decoration} of a hyperbolic metric $m\in T(\Sigma,M)$ is a collection of horocycles $\mathbf{h}=(h_i)_{i\in M}$ around the marked points $i\in M$. The \textit{decorated Teichm\"uller space} $\widetilde{T}(\Sigma,M)$ is the set of all pairs $(m,\mathbf{h})$ where $m$ is an isotopy class of a hyperbolic structure $m\in T(\Sigma,M)$ and $\mathbf{h}$ a decoration. Suppose that $(m,\mathbf{h})\in \widetilde{T}(\Sigma,M)$ is such a decorated metric. An arc between two marked points $i,j\in M$ is isotopic to a unique geodesic $\gamma$ (with respect to $m$). Let us denote the intersection of $\gamma$ with the horocycle $h_i$ by $i'$ and the intersection of $\gamma$ with the horocycle $h_j$ by $j'$. Moreover, we denote by $l(\gamma)$ the signed hyperbolic length of the curve from $i'$ to $j'$ along $\gamma$, with a positive sign if $h_i$ and $h_j$ do not intersect and a negative sign otherwise. The \textit{lambda length} of $\gamma$ (with respect to the decorated metric) is defined as $\lambda_{\gamma}=\operatorname{exp}(l(\gamma)/2)$.

Lambda lengths satisfy many remarkable properties. A fundamental theorem of Fomin and Thurston asserts that for any choice of a decorated metric the lambda lengths $\lambda_{\gamma}\in\mathbb{R}$ attached to the plain tagged arcs $\gamma$ satisfy the same relations as the corresponding cluster variables $x_{\gamma}\in\mathcal{A}(\Sigma,M)$. For example, suppose that the ordinary arcs $\alpha$, $\beta$, $\gamma$ and $\delta$ form a quadrilateral whose vertices are marked points in $M$ such that there are no marked points in the interior of the quadrilateral. Then the lambda lengths obey the Ptolemy relation $\lambda_{\alpha}\lambda_{\gamma}+\lambda_{\beta}\lambda_{\delta}=\lambda_{\epsilon}\lambda_{\digamma}$ where $\epsilon$ and $\digamma$ are the diagonal arcs inside the quadrilateral. In the light of the above considerations we may identify the cluster variable $x_{\gamma}$ attached to a plain tagged arc with the function $\widetilde{T}(\Sigma,M)\to\mathbb{R}$ induced by the lambda length of $\gamma$. To model tagged arcs, let us fix a decorated metric $(m,\mathbf{h})\in \widetilde{T}(\Sigma,M)$. For every $i\in M$ let us call the horocycle $\overline{h}_i$ around $i$ with $l(h_i)\cdot l(\overline{h}_i)=1$ the \textit{conjugated horocycle}. Suppose that the arc $\gamma$ is tagged notched at one endpoint $i$ (or at both endpoints $i,j$). Then we define the lambda length of $\gamma$ with respect to $(m,\mathbf{h})$ to be the lambda length of $\gamma$ with respect to the decorated metric $(m,\mathbf{h}')$ where the decoration $\mathbf{h}'$ is obtained from $\mathbf{h}$ by replacing $h_i$ with $\overline{h}_i$ (and also  $h_j$ with $\overline{h}_j$). As before, the lambda lengths $\lambda_{\gamma}\in\mathbb{R}$ satisfy the same relations as the corresponding cluster variables $x_{\gamma}\in\mathcal{A}(\Sigma,M)$. Thus we may identify the cluster variable $x_{\gamma}$ attached to a tagged arc $\gamma$ with the function $\widetilde{T}(\Sigma,M)\to\mathbb{R}$ induced by the lambda length of $\gamma$.

The following relations will become useful later: 

\begin{rem}[Properties of lambda lengths]
	\label{rem:LambdaLengths}
\begin{itemize}	
\item[(a)] Let $\eta$ be the loop with endpoint $0$ around a self-folded triangle with unique puncture $1$, compare the left and middle part of Figure \ref{Figure:SelfFoldedTriangle}. Moreover, let $\alpha$ be the plain tagged arc between $0$ and $1$ and $\overline{\alpha}$ the notched tagged arc between $0$ and $1$. In this situation we have $\lambda_{\eta}=l(h_1)\cdot\lambda_{\alpha}^2=l(\overline{h}_1)\cdot\lambda_{\overline{\alpha}}^2$,
see Fomin--Thurston \cite[Proposition 7.9 and Proof of Proposition 7.10]{FT}. This relation has interesting consequences. On the one hand we can conclude that $1=l(h_1)\cdot l(\overline{h}_1)=\lambda_{\eta}^2/(\lambda_{\alpha}^2\cdot\lambda_{\overline{a}}^2)$ so that $\lambda_{\eta}=\lambda_{\alpha}\cdot\lambda_{\overline{\alpha}}$, see \cite[Proposition 7.10 and Lemma 8.2]{FT}. On the other hand, we can conclude that $\lambda_{\overline{\alpha}}^2/\lambda_{\alpha}^2=l(h_1)/l(\overline{h}_1)=l(h_1)^2$ so that the quotient $\lambda_{\overline{\alpha}}/\lambda_{\alpha}$ is equal to the hyperbolic length of the horocycle $h_1$.
\item[(b)] As in the right side of Figure \ref{Figure:SelfFoldedTriangle} we consider a digon with vertices $1,2$ and edges $\alpha,\beta$. We assume that the digon contains exactly one puncture $0$. We denote by $\gamma$ the plain tagged arc from $0$ to $1$, by $\delta$ the plain tagged arc from $0$ to $2$, and by $\eta$ the loop around $0$ with endpoint $1$. A result of Fomin--Thurston \cite[Corollary 7.7]{FT} asserts that $\lambda_{\delta}\cdot\lambda_{\eta}=\lambda_{\gamma}\cdot(\lambda_{\alpha}+\lambda_{\beta})$. Note that a similar result holds true for the loop around $0$ with endpoint $2$.
\end{itemize}
\end{rem}

\section{Determinantal identities}
\label{Sec:Det}

\subsection{Determinantal identities for Euclidean distance matrices}

Let $k,m\geq 0$ be natural numbers and suppose that $P_0,P_1,\ldots,P_k$ are situated arbitrarily in $\mathbb{R}^m$ (or in a general metric space). For two indices $i,j\in\{0,1,\ldots,k\}$ we denote by $d_{ij}=\lvert P_iP_j\rvert\in\mathbb{R}^{\geq 0}$ the Euclidean distance between $P_i$ and $P_j$. The objective of \textit{distance geometry} is to reconstruct as much information about the configuration of the points $P_0,P_1,\ldots,P_k$ as possible when all the mutual distances $d_{ij}$ are given. Distance geometry has many applications, for example in GPS navigation. As it turns out, determinants epitomize many properties.

\begin{ex}[Cayley--Menger determinants]
\label{Ex:CM}
\begin{itemize}
\item[(a)] Let $P_0,P_1,P_2$ be points in $\mathbb{R}^2$. We abbreviate $a=\lvert P_0P_1\rvert$, $b=\lvert P_1P_2\rvert$ and $c=\lvert P_2P_0\rvert$. By Heron's formula the determinant
\begin{align*}
D=\left\lvert\begin{matrix}
0&a^2&c^2&1\\
a^2&0&b^2&1\\
c^2&b^2&0&1\\
1&1&1&0\\
\end{matrix}\right\rvert
=-(a+b+c)(a+b-c)(a-b+c)(-a+b+c)
\end{align*}
is equal to $-16A^2$ where $A$ is the area of the triangle $\Delta P_0P_1P_2$. Especially $P_0$, $P_1$ and $P_2$ are collinear if and only if $D=0$. More generally, let $P_0,P_1,\ldots,P_{k}$ be points in $\mathbb{R}^k$. Cayley and Menger consider the $(k+2)\times(k+2)$ matrix
\begin{align*}
CM=\left(\begin{matrix}
0&d_{01}^2&d_{02}^2&d_{03}^2&\cdots&d_{0k}^2&1\\
d_{01}^2&0&d_{12}^2&d_{13}^2&\cdots&d_{1k}^2&1\\
d_{02}^2&d_{12}^2&0&d_{23}^2&\cdots&d_{2k}^2&1\\
d_{03}^2&d_{13}^2&d_{23}^2&0&\cdots&d_{3k}^2&1\\
\vdots&\vdots&\vdots&\vdots&\ddots&\vdots&\vdots\\
d_{0k}^2&d_{1k}^2&d_{2k}^2&d_{3k}^2&\cdots&0&1\\
1&1&1&1&\cdots&1&0
\end{matrix}\right).
\end{align*}
Cayley's theorem asserts that $P_0,P_1,\ldots,P_{k}$ lie in a common hyperplane if and only if $\operatorname{det}(CM)=0$. 
\item[(b)] Suppose that $k\geq 3$ and that the points $P_0,P_1,\ldots,P_{k}$ lie in $\mathbb{R}^{k-1}$. Let us remove the last row and the last column of the previous matrix. In other words we consider the $(k+1)\times(k+1)$ principal minor $CM_{I,I}$ of the Cayley--Menger matrix supported on the index set $I=\{0,1,2,\ldots,k\}$. A folklore theorem asserts that $P_0,P_1,\ldots,P_{k}$ lie on a common sphere or in a common hyperplane if and only if $CM_{I,I}=0$, see for example Pak \cite[Exercise 34.5]{P}. 
\end{itemize}
\end{ex}

\subsection{Determinantal identities for surface cluster algebras of type A}

Let $n\geq 1$ be a natural number. In this section we consider the cluster algebra $\mathcal{A}(A_n)$ from Example \ref{ref:MainExamples}. As it turns out, several determinants are incarnations of cluster variables in $\mathcal{A}(A_n)$.

\begin{rem}[Descriptions of cluster variables as determinants]
\begin{itemize}
\item[(a)] As an algebra $\mathcal{A}(A_n)$ is isomorphic to the homogeneous coordinate ring of the Grassmannian variety $\operatorname{Gr}_2(\mathbb{Q}^{n+3})$ of $2$-dimensional subspaces of a $(n+3)$-dimensional vector space. An element in $U\in\operatorname{Gr}_2(\mathbb{Q}^{n+3})$ is uniquely determined by its Pl\"ucker coordinates i.\,e. the $2\times 2$ minors of a $2\times (n+3)$ matrix whose rows span the subspace $U$. Such a Pl\"ucker coordinate $P_{ij}$ is given by a choice of two columns $i,j\in\{0,1,\ldots,n+2\}$. The exchange relations among the frozen and cluster variables $x_{ij}$ are the same as the Pl\"ucker relations among the coordinates $P_{ij}$. 

\item[(b)] Let $\mathcal{T}$ be the star-shaped triangulation $\mathcal{T}=\mathcal{T}^b\cup\{(i,n+2)\colon 1\leq i\leq n\}$ of $(\Sigma,M)$. There is an explicit description of the Laurent expansions of the cluster variables with respect to the initial seed $(\mathbf{x},B^{\mathcal{T}})$ attached to $\mathcal{T}$. To simplify notation let us denote by $x_{i}'=x_{i-1,i+1}$ the cluster variable obtained by mutating $(\mathbf{x},B^{\mathcal{T}})$ at the index $(i,n+2)\in\mathcal{T}$, or equivalently by flipping $\mathcal{T}$ at the arc $(i,n+2)$. Then we may write the non-initial cluster variables as determinants of tridiagonal matrices as follows. We claim that for all indices $i,j$ with $1\leq i\leq j\leq n$ we have
\begin{align}
\label{Eqn:ClusterVarDet}
x_{i-1,j+1}=M\cdot\left\lvert\begin{matrix}
x_i'&x_{i+1,i+2}&0&0&\cdots&0&0\\
x_{i-1,i}&x_{i+1}'&x_{i+2,i+3}&0&\cdots&0&0\\
0&x_{i,i+1}&x_{i+2}'&x_{i+3,i+4}&\cdots&0&0\\
0&0&x_{i+1,i+2}&x_{i+3}'&\cdots&0&0\\
\vdots&\vdots&\vdots&\vdots&\ddots&\vdots&\vdots\\
0&0&0&0&\cdots&x_{j-1}'&x_{j,j+1}\\
0&0&0&0&\cdots&x_{j-2,j-1}&x_j'\\
\end{matrix}\right\rvert
\end{align}
where indices are read modulo $n+3$ and $M=x_{i,i+1}\cdot x_{i+1,i+2}\cdot\ldots\cdot x_{j-1,j}$ is a monomial. Note that the entries on the main diagonal are cluster variables, the entries on first diagonal below and above the main diagonal are frozen variables and all other entries are zero. We can prove this claim by mathematical induction on $j$ for every fixed $i$. The statement is true for $j=i$ by construction and easy to verify for $j=i+1$ using the Ptolemy relation $x_i'\cdot x_{i+1}'-x_{i-1,i}\cdot x_{i+1,i+2}=x_{i-1,i+2}\cdot x_{i,i+1}$ for the quadrilateral with vertices $i-1,i,i+1,i+2$. The induction step follows from the Ptolemy relation $x_{i-1,j+1}\cdot x_{j-1,j}=x_{i-1,j}\cdot x_{j-1,j+1}-x_{i-1,j-1}\cdot x_{j,j+1}$ for the quadrilateral with vertices $i-1,j-1,j,j+1$ in combination with the Laplace expansion of the determinant. 

Equation (\ref{Eqn:ClusterVarDet}) illustrates Fomin--ZelevinskyÕs Laurent phenomenon: Every cluster variable $x_i'$ belongs to the ring $\mathbb{Z}[\mathbf{x}^{\pm 1}]$ by definition, hence an expansion of the determinant allows us to write every $x_{i-1,j+1}$ as an element in the Laurent polynomial ring $\mathbb{Z}[\mathbf{x}^{\pm 1}]$ of the initial cluster. Moreover, it visualizes the inclusion $x_{i-1,j+1}\in\mathbb{Z}[x_m,x_{m}'\colon 1\leq m\leq n]$, compare Berenstein--Fomin--Zelevinsky's equality of cluster algebra and lower bound \cite[Theorem 1.18]{BFZ}.
\end{itemize}
\end{rem}

Baur--Marsh's \cite{BM} main result concerns the symmetric $(n+3)\times(n+3)$ matrix $BM$ with entries in $\mathcal{A}(A_n)$ for which $BM_{i,j}=x_{i,j}$ holds for all marked points $i,j\in M$. Here we interpret $x_{i,i}$ as zero (because the loop around  $i$ is contractible). Note that Baur and Marsh write the equation in a slightly different form. The different sign in their formula is due to the usage of minors instead of cluster variables.

\begin{theorem}[Baur--Marsh]
\label{thm:BaurMarsh}
The equality
\begin{align*}
\operatorname{det}(BM)=-(-2)^{n+1}\cdot x_{0,1}\cdot x_{1,2}\cdot x_{2,3}\cdot\ldots\cdot x_{n+1,n+2}\cdot x_{n+2,0}
\end{align*}
holds true.
\end{theorem}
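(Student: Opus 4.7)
The plan is to embed the cluster algebra $\mathcal{A}(A_n)$ as a subring of the polynomial ring $R=\mathbb{Z}[a_0,b_0,\ldots,a_{n+2},b_{n+2}]$ via the Plücker parametrization $x_{ij}\mapsto a_ib_j-a_jb_i$ for $i<j$ (extended symmetrically so that $x_{ji}=x_{ij}$), and to verify the determinantal identity inside $R$. This is the standard identification of $\mathcal{A}(A_n)$ with the homogeneous coordinate ring of $\operatorname{Gr}_2(\mathbb{Q}^{n+3})$, recalled in the passage on descriptions of cluster variables as determinants.

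First, I introduce the rescaled coordinates $c_i=b_i/a_i$ in the fraction field of $R$. The elementary identity $a_ib_j-a_jb_i=a_ia_j(c_j-c_i)$ factors $BM$ as $BM=D_a\,\widetilde{BM}\,D_a$, where $D_a=\operatorname{diag}(a_0,\ldots,a_{n+2})$ and $\widetilde{BM}$ is the symmetric matrix with entries $\widetilde{BM}_{ij}=\operatorname{sign}(j-i)(c_j-c_i)$. Consequently $\det(BM)=(a_0a_1\cdots a_{n+2})^2\det(\widetilde{BM})$, reducing the task to computing a signed Euclidean distance-type determinant in the one-dimensional coordinates $c_0,\ldots,c_{n+2}$.

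Next, I compute $\det(\widetilde{BM})$ by a two-stage elementary reduction. Performing the row operations $R_i\leftarrow R_i-R_{i-1}$ for $i=1,\ldots,n+2$ (using the unmodified rows) collapses row $i$ to $u_i\cdot v_i$, where $u_i=c_i-c_{i-1}$ and $v_i$ is the sign vector with entries $+1$ in positions $0,\ldots,i-1$ and $-1$ in positions $i,\ldots,n+2$. After pulling the scalars $u_1\cdots u_{n+2}$ out of the determinant, the column operations $C_j\leftarrow C_j-C_{j-1}$ turn row $0$ into $(0,u_1,u_2,\ldots,u_{n+2})$ and each lower row $i$ into $e_0-2e_i$. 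In the Leibniz expansion of this final matrix, each lower row is supported only at columns $0$ and $i$, whereas row $0$ is supported at columns $1,\ldots,n+2$; the bijection constraint forces each surviving permutation to be a transposition $(0,k)$. Summing the contribution $(-1)\cdot u_k\cdot(-2)^{n+1}$ over $k\in\{1,\ldots,n+2\}$ yields $\det(\widetilde{BM})=-(-2)^{n+1}(c_{n+2}-c_0)\prod_{i=1}^{n+2}u_i$.

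Finally, I substitute $u_i=x_{i-1,i}/(a_{i-1}a_i)$ and $c_{n+2}-c_0=x_{0,n+2}/(a_0a_{n+2})$ back into the answer. The denominators telescope to $(a_0a_1\cdots a_{n+2})^2$, which cancels the prefactor arising from $D_a$; what remains is exactly $-(-2)^{n+1}x_{0,1}x_{1,2}\cdots x_{n+1,n+2}x_{n+2,0}$. The main obstacle is not conceptual but disciplined sign bookkeeping in the Leibniz step: the factor $-(-2)^{n+1}$ arises as the product of the transposition sign $(-1)$, the $n+1$ surviving diagonal entries $(-2)$, and a telescoping summation over $k$, and each piece must be checked carefully to land on the stated formula.
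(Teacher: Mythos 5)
Your argument is correct, but it is a genuinely different route: the paper does not prove Theorem \ref{thm:BaurMarsh} at all (it quotes it from Baur--Marsh, whose own proof goes through the categorification of frieze patterns via the cluster category of type $A_n$), whereas you verify the identity directly in the Pl\"ucker model. Concretely, after embedding $\mathcal{A}(A_n)$ into $\mathbb{Z}[a_i,b_i]$ via $x_{ij}\mapsto a_ib_j-a_jb_i$ (an identification the paper itself recalls, and which is injective, so proving the identity in the image suffices), your factorization $BM=D_a\,\widetilde{BM}\,D_a$ reduces everything to the classical determinant of the distance matrix of collinear points $c_0,\dots,c_{n+2}$; your reduction is sound: the row operations $R_i\leftarrow R_i-R_{i-1}$ and column operations $C_j\leftarrow C_j-C_{j-1}$ (with original rows/columns, i.e.\ unitriangular transformations of determinant $1$) do produce the rows $u_i\,v_i$ and then $e_0-2e_i$, the only surviving permutations are the transpositions $(0\,k)$, and the telescoping $\sum_k u_k=c_{n+2}-c_0$ together with $-(-2)^{n+1}=(-1)^n2^{n+1}$ gives the stated constant; the denominators indeed collapse to $(a_0\cdots a_{n+2})^2$ (each $a_i$ occurs twice), and since the resulting identity is polynomial it descends from the fraction field, where the $c_i=b_i/a_i$ live, back to $R$. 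It is worth noting that your change of coordinates is in spirit the same linearization trick the paper uses for its Cayley--Menger theorem (substituting $\widetilde{x}_{ij}=x_{ij}/(x_{ri}x_{rj})$ to turn Ptolemy relations into $\widetilde{x}_{ij}+\widetilde{x}_{jk}=\widetilde{x}_{ik}$), except that the Pl\"ucker parametrization supplies the needed auxiliary variables without enlarging the polygon; what your approach buys is a short, elementary and self-contained computation (which also recovers the known formula $(-1)^{N}2^{N-1}(c_N-c_0)\prod_i(c_i-c_{i-1})$ for one-dimensional distance matrices), while Baur--Marsh's categorical proof buys structural insight and a representation-theoretic interpretation of the entries that your computation does not provide.
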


Motivated by the two determinants in Euclidean geometry from Remark \ref{Ex:CM} we consider two variations of Baur--Marsh's determinant. We call the symmetric $(n+4)\times (n+4)$ matrix $CM$ with entries  
\begin{align*}
CM_{i,j}=\begin{cases}
0,&\textrm{if } i=j;\\
x_{i,j}^2,&\textrm{if }i\neq j\textrm{ and }0\leq i,j\leq n+2;\\
1,&\textrm{if } i\neq j\textrm{ and }($i=n+3$\textrm{ or }$j=n+3$);
\end{cases}
\end{align*}
in the ring $\mathcal{A}(A_n)$ the \textit{Cayley--Menger cluster matrix of type $A$}. We are also interested in minors of $CM$. Note that if we pick the index set $I=\{0,1,2,\ldots,n+2\}$, then the principal submatrix $PM=CM\vert_{I\times I}$ is the equal to the Hadamard product of the Baur--Marsh matrix with itself. That is, both matrices are indexed by the same set $I$ and for all indices $i,j\in I$ we have $PM_{i,j}=BM_{i,j}^2$. Some authors use the term \textit{entrywise product} instead of \textit{Hadamard product}.

\begin{theorem}[Cayley--Menger cluster matrix of type A]
\label{Thm:CM}
\begin{itemize}
\item[(a)] If $I,J\subseteq \{0,1,\ldots,n+2\}$ are subsets of cardinality $\vert I\vert=\vert J\vert=k\geq 4$, then the minor of the Caley--Menger cluster matrix of type A supported on the index set $I,J$ obeys the equation $CM_{I,J}=PM_{I,J}=0$.
\item[(b)] If $n\geq 2$, then the equation $\operatorname{det}(CM)=0$ holds true. Especially, the matrix $CM$ is singular i.\,e. it does not admit an inverse matrix.
\end{itemize}
\end{theorem}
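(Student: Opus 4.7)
My plan is to prove that $\operatorname{rank}(PM) \leq 3$ as a matrix over the ambient field of $\mathcal{A}(A_n)$; both parts of the theorem follow quickly from this bound. For part (a), every $k \times k$ minor of a matrix of rank at most $3$ vanishes once $k \geq 4$, and the prescribed minors of $CM$ coincide with minors of $PM$ because the extra index $n+3$ is excluded from both $I$ and $J$. For part (b), the matrix $CM$ is obtained from $PM$ by appending one row and one column, so $\operatorname{rank}(CM) \leq \operatorname{rank}(PM) + 2 \leq 5$; when $n \geq 2$ the matrix $CM$ has size at least $6 \times 6$, which forces $\det(CM) = 0$.

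To obtain the rank bound I follow the substitution suggested in the introduction. Fix any vertex $r \in \{0, 1, \ldots, n+2\}$, and let $D$ be the diagonal matrix with $D_{ii} = 1/x_{ri}^2$ for $i \neq r$ and $D_{rr} = 1$. Since $D$ is invertible, $\operatorname{rank}(PM) = \operatorname{rank}(DPMD)$. The entries of $DPMD$ are $\widetilde{x}_{ij}^2$ (with $\widetilde{x}_{ij} = x_{ij}/(x_{ri} x_{rj})$) for $i, j \neq r$, equal to $1$ when exactly one of $i, j$ is $r$, and $0$ at $(r,r)$. Applying Ptolemy's relation to the quadrilateral with vertices $i, j, k, r$ in cyclic order on the polygon and dividing by $x_{ri} x_{rj} x_{rk}$ produces the linear identity $\widetilde{x}_{ij} + \widetilde{x}_{jk} = \widetilde{x}_{ik}$. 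Choosing a reference vertex $r' \neq r$ (say adjacent to $r$) and setting $a_i = \widetilde{x}_{r' i}$ for $i \neq r$ therefore yields $\widetilde{x}_{ij} = a_j - a_i$ (up to a global sign, which is irrelevant after squaring), so $\widetilde{x}_{ij}^2 = (a_i - a_j)^2 = a_i^2 - 2 a_i a_j + a_j^2$.

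Extending $a$ and $s_i := a_i^2$ by $0$ at position $r$, and letting $\mathbf{1}$ denote the all-ones vector and $e_r$ the $r$-th standard basis vector, a direct computation shows that column $j$ of $DPMD$ equals $s - 2 a_j a + a_j^2 \mathbf{1} + (1 - a_j^2) e_r$ when $j \neq r$, while column $r$ equals $\mathbf{1} - e_r$. The resulting $4 \times (n+3)$ coefficient matrix satisfies $(\text{coefficient of } s) = (\text{coefficient of } \mathbf{1}) + (\text{coefficient of } e_r)$ in every column, so it has rank at most $3$; consequently so does $DPMD$. The main technical point I expect to verify carefully is this single linear dependence among the four coefficient vectors, which is what forces the rank to drop from $4$ to $3$; the remainder of the argument is elementary linear algebra.
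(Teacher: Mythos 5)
Your argument is correct, and it rests on the same mechanism as the paper --- the substitution $\widetilde{x}_{ij}=x_{ij}/(x_{ri}x_{rj})$, which linearizes the Ptolemy relations into $\widetilde{x}_{ij}+\widetilde{x}_{jk}=\widetilde{x}_{ik}$ and hence turns the squared entries into quadratics in differences --- but you package the linear algebra differently, and more efficiently at two points. The paper proves (a) minor by minor and only for $k=4$: it first passes to a larger cluster algebra $\mathcal{A}(A_{n'})$ via a rooted cluster morphism in order to find a vertex $r\notin I\cup J$, and then writes the rescaled $4\times4$ block as an $\mathcal{F}$-linear combination of three rank-one matrices; for (b) it uses the bordered-determinant identity expressing $\operatorname{det}(CM)$ through $\operatorname{adj}(PM)$ together with the vanishing of all $(n+2)\times(n+2)$ minors. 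You instead establish the single global bound $\operatorname{rank}(PM)\leq 3$ over the ambient field, keeping $r$ inside the matrix and absorbing its row and column into the $e_r$-term of your column decomposition; from this, (a) follows for all $k\geq 4$ simultaneously with no embedding argument, and (b) follows from $\operatorname{rank}(CM)\leq \operatorname{rank}(PM)+2\leq 5<n+4$ with no adjugate computation. The decisive extra step in your route is the dependence ``coefficient of $s$ equals coefficient of $\mathbf{1}$ plus coefficient of $e_r$'', which cuts the bound from $4$ (that alone would not settle $k=4$ in (a), nor $n=2$ in (b)) down to $3$; your verification of it, including in column $r$, is correct. One point you should state as essential rather than optional: the reference vertex $r'$ must be an endpoint of the linear order obtained by cutting the polygon at $r$ (i.e.\ a neighbour of $r$), since for $i,j$ separated by $r'$ in that order the relations give $\widetilde{x}_{ij}=a_i+a_j$ rather than $\pm(a_i-a_j)$; your choice ``adjacent to $r$'' does exactly this, so the proof stands as written.
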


\begin{proof}
\begin{itemize}
\item[(a)] It is enough to prove the statement for $k=4$. Assume that $I,J\subseteq \{0,1,\ldots,n+2\}$ are two subsets of cardinality $k=4$. For every $n'>n$ there is an inclusion $\iota_{n,n'}\colon \mathcal{A}(A_n)\hookrightarrow \mathcal{A}(A_{n'})$ of rings, and we may view $CM$ as a matrix with entries in $\mathcal{A}(A_{n'})$. The map $\iota_{n,n'}$ is in fact an injective rooted cluster morphism, compare Assem--Dupont--Schiffler \cite[Definition 2.2 and Example in the beginning of Section 4.1]{ADS}. Hence we may assume that there is an index $r\in \{0,1,\ldots,n+2\}\backslash (I\cup J)$ without loss of generality.

Let $\mathcal{F}$ be the ambient field of the cluster algebra $\mathcal{A}(A_{n})$. For all indices $i,j\in \{0,1,\ldots,n+2\}\backslash\{r\}$ we put $\widetilde{x}_{ij}=x_{ij}/(x_{ri}\cdot x_{rj})\in\mathcal{F}$. The Ptolemy relation implies that $\widetilde{x}_{ij}+\widetilde{x}_{jk}=\widetilde{x}_{ik}$ whenever $i,j,k\in\{0,1,\ldots,n+2\}\backslash\{r\} $ are pairwise different indices such that $r,i,j,k$ lie on the boundary of $\Sigma$ in this order. The relation is also true if $i=j$ or $j=k$. Let us put $s=r+1$. The previous relation yields $\widetilde{x}_{ij}=\pm(\widetilde{x}_{is}-\widetilde{x}_{sj})$ for all $i\in I$ and $j\in J$. Especially we have $\widetilde{x}_{ij}^2=\widetilde{x}_{is}^2-2\widetilde{x}_{is}\widetilde{x}_{sj}+\widetilde{x}_{sj}^2$ for all $i\in I$ and $j\in J$.

Let $\widetilde{PM}_{I,J}$ be the $4\times 4$ matrix indexed by $I\times J$ with entries $\widetilde{PM}_{ij}=\widetilde{x}_{ij}^2$ for all $i\in I$ and $j\in J$. Let us introduce the column vectors $v_I=(\widetilde{x}_{is})_{i\in I}$ and $w_I=(\widetilde{x}_{is}^2)_{i\in I}$ and the row vectors $v_J=(\widetilde{x}_{sj})_{j\in J}$ and $w_J=(\widetilde{x}_{sj}^2)_{j\in J}$. Then $\widetilde{PM}_{I,J}$ is a $\mathcal{F}$-linear combination of the $4\times 4$ matrices $w_I \cdot (1,1,1,1)$, $v_I\cdot v_J$, and $(1,1,1,1)^T\cdot w_J$ of rank $1$. We conclude that
\begin{align*}
\operatorname{det}(PM_{I,J})=\left(\prod_{i\in I}x_{ri}\right)^2\cdot\left(\prod_{j\in J}x_{rj}\right)^2\cdot\operatorname{det}(\widetilde{PM}_{I,J})=0. 
\end{align*}

\item[(b)] The second statement follows from the first by the following observation. If $A$ is any $(n+3)\times(n+3)$ matrix and $u$ is a (column) vector of size $n+3$ and $\lambda$ is a scalar, then 
\begin{align*}
\left\lvert\begin{matrix}
A & u\\
u^T & \lambda
\end{matrix}\right\rvert = \lambda\cdot \operatorname{det}(A) -u^T\cdot\operatorname{adj}(A)\cdot u
\end{align*}
where $\operatorname{adj}(A)$ is the adjoint matrix of $A$. In our case we have $\lambda=0$ and $\operatorname{adj}(A)=\operatorname{adj}(PM)=0$, since up to sign the entries in $\operatorname{adj}(PM)$ are $(n+2)\times(n+2)$ minors of $PM$. All of them vanish by part (a) because $n\geq 2$.
\end{itemize}
\end{proof}

\subsection{Determinantal identities for surface cluster algebras of type D}

Let $n\geq 4$ be a natural number and consider the cluster algebra $\mathcal{A}(D_n)$ from Example \ref{ref:MainExamples} attached to a polygon with vertex set $\{1,2,\ldots,n\}$ with one puncture $0$ in its interior. The \textit{generalized Baur--Marsh matrix of type $D$} is the $(n+1)\times(n+1)$ matrix $\BMD$ with entries in $\mathcal{A}(D_n)$ for which $\BMD_{i,j}=x_{i,j}$ holds for all marked points $i,j\in\{0,1,\ldots,n\}$. As before we interpret $x_{ii}$ as zero (because the loop around  $i$ is contractible). Note that $\BMD$ is not symmetric because the cluster variables $x_{i,j}\neq x_{j,i}$ are attached to two different arcs between $i$ and $j$ whenever $i,j\geq 1$ are different indices.

Remark \ref{rem:LambdaLengths}, which summarizes some properties about lambda lengths Fomin and Thurston established in their work, leads to the following conclusions: 

\begin{prop}[Properties of cluster variables]
\label{rem:Useful}
Let $i,j\in\{1,2,\ldots,n\}$ be two different indices corresponding to marked points on the boundary of $\Sigma$. The entries of the generalized Baur--Marsh matrix of type $D$ obey the relations
\begin{align*}
x_{ij}+x_{ji}=x_{i0}\cdot x_{0j}=x_{0i}\cdot x_{j0} 
\end{align*}
inside the cluster algebra $\mathcal{A}(D_n)$.
\end{prop}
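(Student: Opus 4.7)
The plan is to deduce the proposition directly from Remark \ref{rem:LambdaLengths}, applied to the digon cut out inside $(\Sigma,M)$ by the two arcs $(ij)$ and $(ji)$. Since cluster variables in $\mathcal{A}(D_n)$ may be identified with lambda length functions on $\widetilde{T}(\Sigma,M)$, and these functions satisfy the same polynomial identities as the cluster variables, it suffices to verify the identity on the level of lambda lengths.

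First I would set up the geometry: let $i\ne j$ be two marked points on $\partial\Sigma$, let $\alpha=(ij)$ and $\beta=(ji)$ be the two (plain tagged) arcs between them, and observe that $\alpha\cup\beta$ bounds a digon embedded in $(\Sigma,M)$ whose only interior marked point is the puncture $0$. Let $\eta_i$ denote the loop at $i$ enclosing $0$ (obtained by concatenating $\alpha$ and $\beta$ in the appropriate order). By Remark \ref{rem:LambdaLengths}(a) applied to the self-folded triangle at $i$ with puncture $0$ (so that the plain and notched tagged arcs between $0$ and $i$ have lambda lengths $x_{0i}$ and $x_{i0}$), we obtain
\begin{align*}
\lambda_{\eta_i}=x_{0i}\cdot x_{i0}.
\end{align*}
An identical argument gives $\lambda_{\eta_j}=x_{0j}\cdot x_{j0}$.

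Next I would apply Remark \ref{rem:LambdaLengths}(b) to the digon just described, taking $\gamma=(0,i)$, $\delta=(0,j)$, and $\eta=\eta_i$. The cited identity $\lambda_\delta\cdot\lambda_\eta=\lambda_\gamma\cdot(\lambda_\alpha+\lambda_\beta)$ becomes
\begin{align*}
x_{0j}\cdot x_{0i}\cdot x_{i0}=x_{0i}\cdot\left(x_{ij}+x_{ji}\right).
\end{align*}
Cancelling the nonzero factor $x_{0i}$ in the ambient field yields $x_{ij}+x_{ji}=x_{i0}\cdot x_{0j}$. The symmetric statement of Remark \ref{rem:LambdaLengths}(b) for the loop $\eta_j$ at $j$, obtained by interchanging the roles of $i$ and $j$, gives $x_{0i}\cdot x_{0j}\cdot x_{j0}=x_{0j}\cdot(x_{ij}+x_{ji})$, and cancelling $x_{0j}$ produces the second equality $x_{ij}+x_{ji}=x_{0i}\cdot x_{j0}$.

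The only genuinely subtle point is the legitimacy of reading Remark \ref{rem:LambdaLengths}(b) off in our larger surface: the cited identity is proved by Fomin--Thurston for the intrinsic digon, but because the identity is a universal relation among lambda lengths of arcs supported in that sub-digon, and because cluster variables of $\mathcal{A}(D_n)$ associated to those arcs coincide with their lambda length functions on $\widetilde{T}(\Sigma,M)$, the relation transfers verbatim to $\mathcal{A}(D_n)$. Apart from being careful about this identification, the proof is a bookkeeping exercise with the formulas in Remark \ref{rem:LambdaLengths}.
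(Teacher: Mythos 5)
Your proof is correct and follows essentially the same route as the paper: identify the cluster variables with lambda lengths, use Remark \ref{rem:LambdaLengths}(a) to see that $x_{0i}\cdot x_{i0}$ is the lambda length of the loop around the puncture based at $i$, and apply Remark \ref{rem:LambdaLengths}(b) to the digon bounded by $(ij)$ and $(ji)$, then cancel (legitimate since $\mathcal{A}(D_n)$ embeds in its ambient field). The only minor variation is that you obtain the second equality $x_{i0}\cdot x_{0j}=x_{0i}\cdot x_{j0}$ by a symmetric second application of part (b) at the vertex $j$, whereas the paper deduces it from part (a) via the observation that the quotient $x_{i0}/x_{0i}$ equals the length of the horocycle at the puncture and is therefore independent of $i$; both are valid.
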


\begin{proof}
Inside the ambient field $\mathbb{Q}(x_1,x_2,\ldots,x_n)$ of $\mathcal{A}(D_n)$ the identity $x_{i0}/x_{0i}=x_{j0}/x_{0j}$ holds, because by Remark \ref{rem:LambdaLengths} (a) the quotients of the corresponding lambda lengths are both equal to the hyperbolic length of the conjugated horocycle around the puncture $0$. Especially, the relation $x_{i0}x_{0j}=x_{0i}x_{j0}$ holds inside the cluster algebra $\mathcal{A}(D_n)$.

Note that the product $x_{i0}\cdot x_{0i}$ corresponds to the lambda length of the loop around the puncture $0$ with endpoint $i$ by virtue of Remark \ref{rem:LambdaLengths} (a). From part (b) of the same remark we conclude that the relation $x_{0i}\cdot(x_{ij}+x_{ji})=x_{0j}\cdot x_{i0}\cdot x_{0i}$ holds. It follows that $x_{ij}+x_{ji}=x_{i0}\cdot x_{0j}$ because the cluster algebra $\mathcal{A}(D_n)$ does not contain zero divisors.
\end{proof}

\begin{theorem}[Generalized Baur--Marsh matrix of type D] 
\label{Thm:TypeD}
The equality
\begin{align*}
\operatorname{det}(\BMD)= (-1)^{n}\cdot \prod_{i=1}^{n} x_{0i}\cdot x_{i0}
\end{align*}
holds true inside the cluster algebra $\mathcal{A}(D_n)$.
\end{theorem}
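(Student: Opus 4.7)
My plan is to exploit Proposition~\ref{rem:Useful} through carefully chosen row operations. Write $p_i = x_{0i}$ and $q_i = x_{i0}$, so that $M$ takes the bordered form $\bigl(\begin{smallmatrix} 0 & p^\top \\ q & A \end{smallmatrix}\bigr)$, where $A_{ii}=0$ and $A_{ij}=x_{ij}$ off the diagonal. Proposition~\ref{rem:Useful} reads $x_{ij}+x_{ji}=p_iq_j=q_ip_j$, so the operation $R_i\mapsto R_i-x_{i0}R_0$ (applied for $i=1,\ldots,n$) turns the lower-right $(i,j)$-entry into $x_{ij}-x_{i0}x_{0j}=-x_{ji}$ when $i\neq j$, and into $-x_{i0}x_{0i}$ when $i=j$. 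The first row and column are unchanged, so the determinant is preserved; crucially, the diagonal has picked up exactly the factors $x_{i0}x_{0i}$ appearing in the formula.

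Negating rows $1,\ldots,n$ and also column $0$ contributes a sign $(-1)^{n+1}$ and brings the matrix to the form $\bigl(\begin{smallmatrix} 0 & p^\top \\ q & A^\top + D \end{smallmatrix}\bigr)$ with $D=\mathrm{diag}(x_{0i}x_{i0})$. The task is therefore reduced to showing that this new determinant equals $-\prod_{i=1}^n x_{0i}x_{i0}$. I would now iterate: further row operations aim to eliminate the off-diagonal $x_{ji}$ entries coming from $A^\top$, ending with a triangular matrix whose diagonal yields the claimed product.

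This elimination cannot be carried out using Proposition~\ref{rem:Useful} alone; it needs the Ptolemy-type exchange relations supplied by the punctured $n$-gon. Flipping an arc of the form $(0,j)$ in a star-triangulation (analogous to the type $A$ computation sketched in Remark after Theorem~\ref{thm:BaurMarsh}) yields identities such as $x_{0j}\cdot x_{ik}=x_{0i}\cdot x_{jk}+x_{ij}\cdot x_{0k}$ for three boundary vertices $i,j,k$, with the correct orientation conventions for $x_{ij}$ and $x_{jk}$. Such identities let the cross-terms in the cofactor expansion telescope into the single diagonal contribution.

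The main obstacle is the combinatorial bookkeeping of arc orientations: the $(ij)$ versus $(ji)$ convention fixes which Ptolemy relation is applicable at each step, and the cofactor expansion has to be organised so the resulting signed contributions cancel cleanly to leave only $-\prod_i x_{0i}x_{i0}$. An attractive alternative that avoids some of this bookkeeping is induction on $n$, with the base $n=4$ checked by direct expansion and the inductive step carried out by cofactor expansion after restricting along a cluster-algebra embedding $\mathcal{A}(D_{n-1})\hookrightarrow \mathcal{A}(D_n)$, in the spirit of the argument used in the proof of Theorem~\ref{Thm:CM}(a).
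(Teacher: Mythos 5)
Your opening reduction is correct: by Proposition \ref{rem:Useful}, the operations $R_i\mapsto R_i-x_{i0}R_0$ ($1\le i\le n$) replace the lower-right block by $-(A^{\top}+D)$ with $D=\operatorname{diag}(x_{i0}x_{0i})$, and the subsequent sign bookkeeping is fine. But this step does not bring you materially closer to the determinant: $A^{\top}+D$ is exactly as dense as $A$, and everything that actually makes the theorem true --- the collapse of the determinant via the Ptolemy exchange relations --- still lies ahead. Precisely there the proposal stops being a proof: ``further row operations aim to eliminate the off-diagonal entries'' names a goal, not an argument. You specify neither the operations nor the orientation-sensitive identities they would rely on; your sample relation $x_{0j}x_{ik}=x_{0i}x_{jk}+x_{ij}x_{0k}$ is offered only ``with the correct orientation conventions,'' which is exactly the point that has to be settled (for boundary vertices in cyclic position the usable relation for the quadrilateral $0,1,j,i$ is $x_{1i}x_{0j}=x_{01}x_{ji}+x_{0i}x_{1j}$, and whether $x_{ij}$ or $x_{ji}$ appears depends on the ordering). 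Moreover, the advertised endpoint --- ``a triangular matrix whose diagonal yields the claimed product'' --- cannot occur literally, because the $(0,0)$ corner of your bordered matrix is $0$; the terminal shape must be of bordered, anti-triangular type in which a $2\times 2$ minor taken from the first two rows also contributes, an extra structural fact you would still have to find and prove. The fallback of induction on $n$ is likewise only a plan: expanding the $(n+1)\times(n+1)$ determinant along the row indexed by $n$ produces complementary minors that, apart from the principal one, are not generalized Baur--Marsh matrices, and the entries $x_{nj},x_{jn}$ do not lie in the image of the proposed embedding $\mathcal{A}(D_{n-1})\hookrightarrow\mathcal{A}(D_n)$, so no induction hypothesis applies to those terms.

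For comparison, the paper performs the elimination in a single pass by combining two rows at once: for $i\ge 2$ it replaces $r_i$ by $\widetilde r_i=x_{01}r_i+x_{1i}r_0-x_{0i}r_1$, at the cost of a factor $x_{01}$ per row. The Ptolemy relations for the quadrilaterals $0,1,j,i$ (when $2\le j<i$) and $0,1,i,j$ (when $j>i$), combined with Proposition \ref{rem:Useful}, show that $\widetilde r_i$ vanishes in columns $0$ and $i,i+1,\dots,n$, equals $x_{01}^2x_{i0}$ in column $1$, and equals $x_{01}x_{0i}x_{j0}$ in columns $2\le j\le i-1$. The resulting matrix is sparse enough that its determinant is the product of the $2\times2$ minor on rows $0,1$ and columns $0,n$ with the subdiagonal entries $\widetilde r_{i,i-1}$, and cancelling $(x_{01})^{n-1}$ (legitimate because $\mathcal{A}(D_n)$ has no zero divisors) yields the stated formula. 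Your first step could be retained, but you would still need an explicit, verified elimination scheme of this kind; as written, the decisive computation is missing.
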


\begin{proof}
Let us denote the rows of the matrix by $r_0,r_1,\ldots,r_n$. For every index $i\in\{2,3,\ldots,n\}$ we perform a row operation which replaces row $r_i$ with the linear combination $\widetilde{r}_i=x_{01}\cdot r_i+x_{1i}\cdot r_0-x_{0i}\cdot r_1$. This row operation changes the determinant by the factor $x_{01}$. We can compute the entries of $\widetilde{r}_i$ explicitly:
\begin{itemize}

\item[(i)] The entry $\left(\widetilde{r}_i\right)_0=x_{01}\cdot x_{i0}+0-x_{0i}\cdot x_{10}=0$ vanishes due to Proposition \ref{rem:Useful}.

\item[(ii)] The same proposition implies $\left(\widetilde{r}_i\right)_1=x_{01}\cdot x_{i1}+x_{1i}\cdot x_{01}-0=x_{01}\cdot(x_{i1}+x_{1i})=x_{01}^2\cdot x_{i0}$.

\item[(iii)] We claim that $\left(\widetilde{r}_i\right)_j=x_{01}\cdot x_{0i} \cdot x_{j0}$ whenever $j\in\{2,3,\ldots,i-1\}$. To verify the claim, note that we have $\left(\widetilde{r}_i\right)_j=x_{01}\cdot x_{ij}+x_{1i}\cdot x_{0j}-x_{0i}\cdot x_{1j}$ by construction. The Ptolemy relation for the quadrilateral with vertices $0,1,j,i$ implies $x_{1i}\cdot x_{0j}-x_{0i}x_{1j}=x_{01}\cdot x_{ji}$ so that we obtain $\left(\widetilde{r}_i\right)_j=x_{01}\cdot x_{ij}+x_{01}\cdot x_{ji}$. Now the claim follows from Proposition \ref{rem:Useful}. 

\item[(iv)] The entry $\left(\widetilde{r}_i\right)_i=0+x_{1i}\cdot x_{0i}-x_{01}\cdot x_{1i}=0$ vanishes by construction.

\item[(v)] The entry $\left(\widetilde{r}_i\right)_j=x_{01}\cdot x_{ij}+x_{0j}\cdot x_{1i}-x_{0i}\cdot x_{1j}=0$ vanishes whenever $j\in\{i+1,i+2,\ldots,n\}$ due to the Ptolemy relation for the quadrilateral with vertices $0,1,i,j$.

\end{itemize}

We denote by $\BMDmod$ the resulting matrix obtained after performing all these row operations. In other words, $\BMDmod$ is the matrix with rows $r_0,r_1,\widetilde{r}_2,\ldots,\widetilde{r}_n$. The matrix $\BMDmod$ is sparse. The only non-zero entry in row $2$ is $\widetilde{r}_{21}$, the only non-zero entries row $3$ are $\widetilde{r}_{32}$ and $\widetilde{r}_{31}$, and the only non-zero entries in row $n$ are $\widetilde{r}_{n,n-1},\widetilde{r}_{n,n-2},\ldots,\widetilde{r}_{n,1}$. Together with the above considerations we can conclude that
\begin{align*}
(-1)^{n}\cdot\operatorname{det}(\BMDmod)=-\BMDmod_{01,0n}\cdot \prod_{i=2}^{n}\widetilde{r}_{i,i-1} = \left(x_{1,0}\cdot x_{0,n}\right)\cdot \prod_{i=2}^{n}\left(x_{0,1}\cdot x_{0,i}\cdot x_{i-1,0}\right)=\left(x_{0,1}\right)^{n-1}\cdot \prod_{i=1}^{n}\left(x_{0,i}\cdot x_{i,0}\right),
\end{align*}
where $\BMDmod_{01,0n}=-x_{1,0}\cdot x_{0,n}=-x_{0,1}\cdot x_{n,0}$ is the minor supported on rows $0,1$ and columns $0,n$. As noted above we have $\operatorname{det}(\BMDmod)=(x_{0,1})^{n-1}\cdot\operatorname{det}(\BMD)$. Hence the theorem follows by cancelling the factor $(x_{0,1})^{n-1}\in \mathcal{A}(D_n)$, which is possible because the cluster algebra $\mathcal{A}(D_n)$ does not contain zero divisors.
\end{proof}

Note that we may interpret the factor $x_{i0}\cdot x_{0i}$ on the right hand side of the theorem as the lambda length of the loop around the puncture $0$ with endpoint $i$.

It would be interesting to find a framework that combines Baur--Marsh's and our setup and admits generalizations to other surface cluster algebras.

\subsection{Acknowledgments}

The author would like to thank Karin Baur and Robert Marsh for valuable discussion about their work about determinants in type $A$ and possible generalizations. The author would like to thank Anna Felikson for useful comments about lambda lengths whose usage shortened proofs of statements in an earlier version of the paper. The author would like to thank Ralf Schiffler for valuable discussions, in particular about variations of the construction of the determinant in type $D$.

{\scriptsize

}

\end{document}